\theoremstyle{plain}
\newtheorem{thm}{Theorem}
\newtheorem*{thm*}{Theorem}
\newtheorem{lem}{Lemma}[section]
\newtheorem{cor}[lem]{Corollary}
\newtheorem{prop}[lem]{Proposition}
\theoremstyle{definition}
\newtheorem{df}[lem]{Definition}
\newtheorem{rem}[lem]{Remark}
\newtheorem{ex}[lem]{Example}
\newcommand{\bbR}{\mathbb{R}}
\newcommand{\bbZ}{\mathbb{Z}}
\newcommand{\bbH}{\mathbb{H}}
\newcommand{\bbO}{\mathbb{O}}
\newcommand{\R}{\mathbb{R}}
\newcommand{\Z}{\mathbb{Z}}
\newcommand{\C}{\mathbb{C}}
\newcommand{\cA}{{\mathcal{A}}}
\begin{document}

\title{Classification of the algebras $\bbO_{p,q}$}

\date{}

\author{Marie Kreusch and Sophie Morier-Genoud}

\maketitle

\begin{abstract}
We study a series of real nonassociative algebras $\bbO_{p,q}$ introduced in~\cite{MGO2011}.
These algebras have a natural $\Z_2^n$-grading, where $n=p+q$, and
they are characterized by a cubic form over the field $\Z_2$.
We establish all the possible isomorphisms between the algebras $\bbO_{p,q}$
preserving the structure of $\Z_2^n$-graded algebra.
The classification table of $\bbO_{p,q}$ is quite similar to that of 
the real Clifford algebras $\mathrm{Cl}_{p,q}$,
the main difference is that the algebras $\bbO_{n,0}$ and $\bbO_{0,n}$ are exceptional. 
\end{abstract}


\section{Introduction}

The series of noncommutative and nonassociative algebras, $\bbO_{p,q}$, over the field $\R$ of real numbers,
was introduced in~\cite{MGO2011}.
The algebras $\bbO_{p,q}$ generalize the classical $\Z_2^n$-graded algebras, where
$\Z_2^n=\Z/2\Z\times\cdots\times\Z/2\Z$ is the abelian group with $n=p+q$ generators of order $2$.
Noncommutativity and nonassociativity of the algebras are controlled by the grading,
within the framework developed in~\cite{AM1999,AM2002}. 
The main feature of the algebras $\bbO_{p,q}$ that distinguishes them from the 
algebras considered in~\cite{AM1999,AM2002}
(for instance, from the Cayley-Dickson algebras)
is that they are characterized by a {\it cubic form} on the vector space $\Z_2^n$ over 
the field~$\Z_2$ of two elements. 
The algebras $\bbO_{p,q}$ have applications to the classical Hurwitz
problem of sum of square identities and related problems, see~\cite{LMGO2011,MGO2013}.

The series of algebras $\bbO_{p,q}$ can be compared to the series of Clifford algebras, $\mathrm{Cl}_{p,q}$,
as well as to the Cayley-Dickson algebras, see the following diagram.

\medskip

\setlength{\unitlength}{3644sp}%
\begingroup\makeatletter\ifx\SetFigFont\undefined%
\gdef\SetFigFont#1#2#3#4#5{%
  \reset@font\fontsize{#1}{#2pt}%
  \fontfamily{#3}\fontseries{#4}\fontshape{#5}%
  \selectfont}%
\fi\endgroup%
\begin{picture}(6874,1966)(4951,-4685)
\put(7651,-4561){\makebox(0,0)[lb]{\smash{{\SetFigFont{12}{20.4}{\rmdefault}{\mddefault}{\updefault}{\color[rgb]{0,0,0}$\bbH$}%
}}}}
\thinlines
{\color[rgb]{0,0,0}\put(7988,-4448){\vector( 1, 0){900}}
}%
{\color[rgb]{0,0,0}\put(6638,-4448){\vector( 1, 0){900}}
}%
{\color[rgb]{0,0,0}\put(5288,-4448){\vector( 1, 0){900}}
}%
{\color[rgb]{0,0,0}\put(10418,-4493){\oval(360,360)[bl]}
\put(10418,-4516){\oval(360,360)[tl]}
\put(12000,-4493){\oval(360,360)[br]}
\put(12000,-4516){\oval(360,360)[tr]}
\put(10418,-4673){\line( 1, 0){1590}}
\put(10418,-4336){\line( 1, 0){1590}}
}%
{\color[rgb]{0,0,0}\put(8956,-3143){\oval(360,360)[bl]}
\put(8956,-2941){\oval(360,360)[tl]}
\put(9383,-3143){\oval(360,360)[br]}
\put(9383,-2941){\oval(360,360)[tr]}
\put(8956,-3323){\line( 1, 0){427}}
\put(8956,-2761){\line( 1, 0){427}}
\put(8776,-3143){\line( 0, 1){202}}
\put(9563,-3143){\line( 0, 1){202}}
}%
{\color[rgb]{0,0,0}\put(7268,-3143){\oval(360,360)[bl]}
\put(7268,-2941){\oval(360,360)[tl]}
\put(8258,-3143){\oval(360,360)[br]}
\put(8258,-2941){\oval(360,360)[tr]}
\put(7268,-3323){\line( 1, 0){990}}
\put(7268,-2761){\line( 1, 0){990}}
\put(7088,-3143){\line( 0, 1){202}}
\put(8438,-3143){\line( 0, 1){202}}
}%
{\color[rgb]{0,0,0}\put(9113,-4223){\vector( 0, 1){675}}
}%
{\color[rgb]{0,0,0}\put(7763,-4223){\vector( 0, 1){675}}
}%
\put(4951,-4561){\makebox(0,0)[lb]{\smash{{\SetFigFont{12}{20.4}{\rmdefault}{\mddefault}{\updefault}{\color[rgb]{0,0,0}$\R$}%
}}}}
\put(6301,-4561){\makebox(0,0)[lb]{\smash{{\SetFigFont{12}{20.4}{\rmdefault}{\mddefault}{\updefault}{\color[rgb]{0,0,0}$\C$}%
}}}}
\put(9001,-3211){\makebox(0,0)[lb]{\smash{{\SetFigFont{12}{16.8}{\rmdefault}{\mddefault}{\updefault}{\color[rgb]{0,0,0}$\bbO_{p,q}$}%
}}}}
\put(7651,-3211){\makebox(0,0)[lb]{\smash{{\SetFigFont{12}{16.8}{\rmdefault}{\mddefault}{\updefault}{\color[rgb]{0,0,0}$\mathrm{Cl}_{p,q}$}%
}}}}
\put(8888,-2986){\makebox(0,0)[lb]{\smash{{\SetFigFont{11}{16.8}{\rmdefault}{\mddefault}{\updefault}{\color[rgb]{0,0,0}Series}%
}}}}
\put(7201,-2986){\makebox(0,0)[lb]{\smash{{\SetFigFont{11}{16.8}{\rmdefault}{\mddefault}{\updefault}{\color[rgb]{0,0,0}Clifford Alg.}%
}}}}
\put(11026,-4561){\makebox(0,0)[lb]{\smash{{\SetFigFont{11}{16.8}{\rmdefault}{\mddefault}{\updefault}{\color[rgb]{0,0,0}Dickson Alg.}%
}}}}
\put(10351,-4561){\makebox(0,0)[lb]{\smash{{\SetFigFont{11}{16.8}{\rmdefault}{\mddefault}{\updefault}{\color[rgb]{0,0,0}Cayley}%
}}}}
\put(9001,-4561){\makebox(0,0)[lb]{\smash{{\SetFigFont{12}{20.4}{\rmdefault}{\mddefault}{\updefault}{\color[rgb]{0,0,0}$\bbO$}%
}}}}
{\color[rgb]{0,0,0}\put(9338,-4448){\vector( 1, 0){788}}
}%
\end{picture}%

The integer parameters $p,q$ refer to the signature of the set of generators of the algebras.
The problem of classification of $\bbO_{p,q}$ with respect to the signature $(p,q)$
was formulated in~\cite{MGO2011}.
In this paper, we answer this question.

The classification table of Clifford algebras has beautiful properties of symmetry and periodicity.
In this paper we establish quite similar properties for the algebras $\bbO_{p,q}$,
with the only difference: the algebras $\bbO_{n,0}$ and~$\bbO_{0,n}$ are exceptional.
Note that the Clifford algebras over $\R$ are very well understood; 
every algebra $\mathrm{Cl}_{p,q}$ is isomorphic to a matrix algebra over $\R, \C$ or~$\bbH$. 
The structure of the algebras $\bbO_{p,q}$ is more complicated and needs a further investigation.

The paper is organized as follows. 
In Section \ref{Sectiondefinition}, we provide three equivalent definitions of the algebras $\bbO_{p,q}$ 
and recall important notions related to them.  
In Section \ref{Sectionclassification}, our main result is settled. We establish isomorphisms between the algebras of the series that preserve the structure of $\mathbb{Z}_2^n$-graded algebra . 
We also present a table to illustrate the symmetry properties and to give an overview of the situation comparable to the Clifford case.

\section{The algebras $\bbO_{p,q}$ generalizing the octonions}  \label{Sectiondefinition}
In this section, we recall the details about the algebras $\bbO_{p,q}$, see \cite{MGO2011}.  
We give three equivalent definitions of these algebras.
The first definition describes $\bbO_{p,q}$ as twisted group algebras over~$\Z_2^n$.
The second definition uses generators and relations.
The third way to describe $\bbO_{p,q}$ is to understand them as nonassociative deformations of Clifford algebras.

\subsection{Twisted group algebras over $\bbZ_2^n$, the first definition of $\bbO_{p,q}$}
We denote by $\bbZ_2$ the abelian group on two elements $\{0,1\}$.
Let  $f$ be any function from $\bbZ_2^n \times \bbZ_2^n $ to $\bbZ_2$. 
The twisted group algebra $\mathcal{A}= (\bbR\left[  \bbZ_2^n \right],f)$ is defined as the real linear space over the formal basis 
$\{u_x, x\in \bbZ_2^n\}$ together with the product given by
\begin{equation}\label{twistedproduct}
u_x \cdot u_y = (-1)^{f(x,y)} u_{x+y},
\end{equation}
for all $x,y\in \bbZ_2^n$.

\begin{ex}
(a)
The algebra of quaternions $\bbH$ 
($\,\simeq\mathrm{Cl}_{0,2}$), and more generally every Clifford algebra $\mathrm{Cl}_{p,q}$, 
can be realized as twisted group algebras over $\bbZ_2^n$, \cite{AM2002}.
Denote by $x=(x_1, \ldots,x_n)$ and $y=(y_1, \ldots,y_n)$ the elements in $\bbZ_2^n$.
The corresponding twisting function is bilinear:
\begin{equation}
\label{fclassic}
f _{\mathrm{Cl}_{p,q}} \left( x,y \right)=
\sum_{1 \leq i \leq j \leq n} x_i y_j + \sum_{1\leq{}i\leq{}p}x_i y_i \quad (n=p+q).
\end{equation}
In particular, $f _{\bbH} \left( x,y \right)=x_1y_1+x_1y_2+x_2y_2$.

(b)
The algebra of octonions $\bbO$ is a twisted group algebra over $\bbZ_2^3$
with the cubic twisting function:
\begin{equation}
\label{fclassic2}
f _{\bbO} \left( x,y \right)=
(x_1 x_2 y_3 +  x_1 y_2 x_3 + y_1 x_2 x_3 ) + \sum_{1 \leq i \leq j \leq 3} x_i y_j   ,
\end{equation}
see \cite{AM1999}.
\end{ex}

In general, a twisted group algebra is neither commutative nor associative. 
The defect of commutativity and associativity is
measured by a symmetric function $\beta:\bbZ_2^n \times \bbZ_2^n\to\bbZ_2$, 
and a function $\phi:\bbZ_2^n \times \bbZ_2^n \times \bbZ_2^n \to\bbZ_2$, respectively:
\begin{eqnarray} 
\label{comass}
u_x \cdot u_y &=& (-1)^{\beta(x,y)}\; u_y \cdot u_x \label{comass1} ,\\[4pt]
u_x \cdot ( u_y \cdot u_z)&  =& (-1)^{\phi(x,y,z)} \;(u_x \cdot  u_y) \cdot u_z  \label{comass2},
\end{eqnarray}
where explicitly
\begin{eqnarray}
\label{betaphi}
\beta(x,y) & =& f(x,y) + f(y,x) \label{betaphi1} ,\\[4pt]
\phi(x,y,z) &=&  f(y,z) + f(x+y,z) + f(x,y+z) + f(x,y) \label{betaphi2}  .
\end{eqnarray}
Such structures were studied in a more general setting, see e.g. \cite{AM1999, AEP2001}.

\begin{df}
\label{definition}\cite{MGO2011}
The  \emph{algebra $\bbO_{p,q}$} is the twisted group algebra 
$(\bbR\left[  \bbZ_2^n \right],f _{\bbO_{p,q}})$
with the twisting function 
\begin{equation}
\label{fO}
f _{\bbO_{p,q}} \left( x,y \right)= 
\sum_{1 \leq i < j < k \leq n} (x_i x_j y_k +  x_i y_j x_k + y_i x_j x_k ) + 
\sum_{1 \leq i \leq j \leq n} x_i y_j + \sum_{1\leq{}i\leq{}p}x_i y_i , 
\end{equation}
$n=p+q\geq 3$.
\end{df}

\begin{rem}
The algebra $\bbO_{0,3}$ is nothing but the classical algebra $\bbO$. 
Comparing the definitions of the twisting functions \eqref{fclassic}, \eqref{fclassic2} and \eqref{fO} 
gives a first viewpoint to understand 
the algebras $\bbO_{p,q}$ as a generalization of $\bbO$ in the same way as $\mathrm{Cl}_{p,q}$ generalize 
$\bbH$.
\end{rem}

For both series of algebras $\mathrm{Cl}_{p,q}$ and $\bbO_{p,q}$
the index $(p,q)$ is called the {\it signature}.

\subsection{Twisted group algebras as graded algebras}

By definition, every twisted group algebra $(\bbR\left[  \bbZ_2^n \right],f)$ is a natural graded algebra over the group $\bbZ_2^n$.
Consider the following basis elements of the abelian group $\bbZ_2^n$:
$$
e_i := (0, \ldots, 0, 1 , 0, \ldots , 0),
$$
where $1$ stands at i$^{th}$ position.
The corresponding homogeneous elements $u_i:=u_{e_i}$, $1\leq i \leq n$,
form a set of generators of the algebra $(\bbR\left[  \bbZ_2^n \right],f)$.
The {\it degree} of every generator is an element of $\Z_2^n$:
$$
\bar u_i := (0, \ldots, 0, 1 , 0, \ldots , 0)=e_i,
$$
where $1$ stands at i$^{th}$ position.
Let $u=u_{i_1}\cdots{}u_{i_k}$ be any monomial in the generators, define its degree (independently from the parenthesizing in the monomial) by
\begin{equation}
\label{GMon}
\bar u:=\bar u_{i_1}+\cdots+\bar u_{i_k},
\end{equation}
which is again an element of $\Z_2^n$.
The relations between the generators $u_i$ are entirely determined by the function $f$.

Note that the element $1:= u_{(0, \ldots , 0)}$ is the {\it unit} of the algebra.
Note also that the algebra is {\it graded-commutative} and {\it graded-associative}, as relations of type\eqref{comass}, \eqref{comass2} hold between homogeneous elements.

\subsection{Generators and relations, the second definition of $\bbO_{p,q}$} \label{abstract algebras}

We give here an alternative definition of the algebras $\bbO_{p,q}$ with generators and relations, 
see~\cite{MGO2011}.

In order to make this definition clear, we start with the classical abstract definition of the Clifford algebras.
The algebra $\mathrm{Cl}_{p,q}$
is the unique associative algebra generated by $n$ elements 
$v_1, \ldots, v_n$, where $n=p+q$, subject to the relations:
\begin{equation}\label{cliffrel}
\begin{array}{ccll}
v_i^2 &=& 
\left\{\begin{array}{cl}
1  &  \mbox{ if} \hspace{0.5cm}  1 \leq i \leq p,    \\
-1   & \mbox{ if} \hspace{0.5cm}  p+ 1 \leq i \leq n, \end{array}\right.\\[10pt]
v_{i} \cdot v_j&=&-v_{j} \cdot  v_i , & \mbox{ for all } \hspace{0.5cm}  i \neq j  \leq n. 
\end{array}
\end{equation}
Since $\mathrm{Cl}_{p,q}$ is associative, one of course has
$v_{i} \cdot  (v_j \cdot  v_k)=(v_i \cdot   v_j)\cdot   v_k$ for all $i , j , k\leq n$.

A similar definition of the algebras $\bbO_{p,q}$ consists in replacing
the associativity by a condition of graded-associativity.
First notice that there is a unique trilinear (or tri-addidive) alternate function
$
\phi:\Z_2^n\times\Z_2^n\times\Z_2^n\to\Z_2,
$
such that 
\begin{equation}
\label{defphi}
\phi(e_i,e_j,e_k)=1,
\end{equation}
for all $i\not= j\not= k$ in $\{1, \ldots ,n\}$.
This function is obviously {\it symmetric}, i.e.,
\begin{equation}
\label{Sic}
\phi(x,y,z)=\phi(x,z,y)=\ldots=\phi(z,y,x),
\end{equation}
for all $x,y,z\in\Z_2^n$.


\begin{df} 
\label{definitionBis}\cite{MGO2011}
The \emph{algebra} $\bbO_{p,q}$ is the unique real unital algebra, 
generated by $n$ elements $ u_1, \ldots , u_n $, where $n=p+q$,
subject to the relations
\begin{equation}\label{genrel}
\begin{array}{ccll}
u_i^2 &=& 
\left\{\begin{array}{cl}
1  &  \mbox{ if} \hspace{0.5cm}  1 \leq i \leq p,    \\
-1   & \mbox{ if} \hspace{0.5cm}  p+ 1 \leq i \leq n, \end{array}\right.\\[10pt]
u_i \cdot u_j&=&-u_j \cdot  u_i , & \mbox{ for all } \hspace{0.5cm}  i \neq j  \leq n. 
\end{array}
\end{equation}
together with the graded associativity
\begin{equation}
\label{MonRel}
u\cdot(v\cdot w)=(-1)^{\phi(\bar u, \bar v, \bar w)}(u\cdot v)\cdot w,
\end{equation}
where $u,v,w$ are monomials, and $\phi$ the unique trilinear alternate form given by \eqref{defphi}.
\end{df}

In particular, the relation \eqref{MonRel} implies
$$
u_{i} \cdot  (u_j \cdot u_k)=-(u_i \cdot u_j) \cdot  u_k, 
$$
that is, the generators anti-associate with each other.
Note that the algebra $\bbO_{p,q}$ is graded-alternative, 
 i.e. $u\cdot(u\cdot v)=u^2\cdot v$, for all homogeneous elements $u$, $v$.

\subsection{The generating cubic form} \label{abstract algebras}
We now want to give a brief account on the theory developed in \cite{MGO2011}
and give the main ideas and main tools 
that we will need in the sequel. 

Consider an arbitrary twisted group algebra $\mathcal{A}= (\bbR\left[  \bbZ_2^n \right],f)$.
The multiplication rule of the algebra is encoded by $f$.
The structure of the algebra is encoded by $\beta$ and $\phi$, given in \eqref{comass1}--\eqref{comass2} and \eqref{betaphi1}--\eqref{betaphi2}. 

\begin{df}\label{defalpha}
A function $\alpha: \bbZ_2^n \longrightarrow \bbZ_2$ is called a \emph{generating function} 
for $\mathcal{A}$, if
$$
\begin{array}{llcl}
(i)&f(x,x)&=&\alpha(x),\\[6pt]
(ii)&\beta( x, y )  &=& \alpha (x+y) + \alpha (x) + \alpha (y) , \\[6pt]
(iii)&\phi( x, y, z )  &=& \alpha (x+y+z ) + \alpha (x+y) + \alpha (x+z) +  \alpha (y+z) + \alpha (x) + \alpha (y) + \alpha(z),
\end{array}
$$
for $x,y$ and $z$ in $\bbZ_2^n$.
\end{df}

One of the main results of \cite{MGO2011} is the following.

\begin{thm*} 
(i)
A twisted group algebra $\mathcal{A}$ has a generating function if and only if the function $\phi$ is 
symmetric as in \eqref{Sic}.

(ii)
The generating function $\alpha$ is a polynomial on $\Z_2^n$ of degree $\leq3$.
\end{thm*}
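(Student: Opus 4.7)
The implication $(\Rightarrow)$ of (i) is immediate: the RHS of Def \ref{defalpha}(iii) is manifestly invariant under permutations of $(x,y,z)$, so the existence of $\alpha$ forces $\phi$ to be symmetric. For the converse I would take $\alpha(x) := f(x,x)$ as the obvious candidate; property (i) of Def \ref{defalpha} is then tautological, and I would use freely the consequence of unitality $f(0, \cdot) = f(\cdot, 0) = 0$.

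The engine of the proof is to first extract two ``symmetrization'' consequences of the symmetry of $\phi$. Expanding $\phi(x,y,z) = \phi(y,x,z)$ and $\phi(x,y,z) = \phi(x,z,y)$ via \eqref{betaphi2} yields, after cancellation, identities equivalent to the total symmetry of the defects of bi-additivity
\[
h_v(a,b) := f(v, a+b) + f(v,a) + f(v,b), \qquad h'_v(a,b) := f(a+b, v) + f(a,v) + f(b,v)
\]
viewed as functions of their three arguments $(v, a, b)$. With this in hand, I would verify Def \ref{defalpha}(ii): the specialization $\phi(x,y,y) = \phi(y,y,x)$ of the symmetry collapses, after using unitality, to $f(x+y, y) = f(x,y) + f(y,y)$, and iterating yields $f(x+y, x+y) = f(x,x) + f(y,y) + f(x,y) + f(y,x)$, which is exactly $\alpha(x+y) + \alpha(x) + \alpha(y) = \beta(x,y)$. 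For Def \ref{defalpha}(iii), substituting this formula for $\alpha$ into the RHS and cancelling reduces the required identity to $\phi(x,y,z) = \beta(x+y,z) + \beta(x,z) + \beta(y,z)$, which is a direct unfolding of the total symmetry of $h$. This completes (i).

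For part (ii) the plan is to first prove that $\phi$ is trilinear; the degree bound then drops out by an explicit expansion. Setting $F_b(v) := f(v,b)$ and writing $D_a g(v) := g(v+a) + g(v)$ for the finite-difference operator, the total symmetry of $h'$ unfolds into the fact that $D_a F_b(y) - D_b F_a(y) = \beta(a,b)$ is \emph{constant} in $y$, so a further difference gives $D_z D_a F_b = D_z D_b F_a$. Meanwhile a direct expansion of \eqref{betaphi2} would show
\[
\phi(a+b, y, z) + \phi(a, y, z) + \phi(b, y, z) = D_z D_a F_b(y) + D_z D_b F_a(y),
\]
which therefore vanishes; hence $\phi$ is linear in its first argument, and by symmetry trilinear. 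To conclude, I would iterate Def \ref{defalpha}(ii) and (iii) along the coordinate decomposition $x = \sum_i x_i e_i$ with $x_i \in \{0,1\}$ (trilinearity of $\phi$ making the higher-iteration sums telescope cleanly into the symmetric form $\sum_{i<j<k} \phi(a_i,a_j,a_k)$) to obtain
\[
\alpha(x) = \sum_i x_i\, \alpha(e_i) + \sum_{i<j} x_i x_j\, \beta(e_i, e_j) + \sum_{i<j<k} x_i x_j x_k\, \phi(e_i, e_j, e_k),
\]
establishing $\deg \alpha \leq 3$. The main obstacle, as I see it, is spotting the identity ``$\beta(a,b)$ is the constant value of $D_a F_b - D_b F_a$'' hidden inside the symmetry of $h'$; once the auxiliary defects $h, h'$ are in place, the rest is mechanical bookkeeping.
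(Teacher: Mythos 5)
Your argument is essentially correct, but note that the paper itself offers nothing to compare it with: the statement is quoted verbatim from \cite{MGO2011} (``One of the main results of \cite{MGO2011} is the following'') and no proof is given here --- the only proof in this paper is of the converse/uniqueness statement, Proposition \ref{uniqA}. So your write-up is a genuinely self-contained substitute for the imported result. Your route --- extracting the total symmetry of the bi-additivity defects $h_v(a,b)$ and $h'_v(a,b)$ from $\phi(x,y,z)=\phi(y,x,z)$ and $\phi(x,y,z)=\phi(x,z,y)$, then reading off (ii) and (iii) of Definition \ref{defalpha}, and finally proving tri-additivity of $\phi$ by a finite-difference computation --- is an elementary reworking of the cohomological argument of \cite{MGO2011}, where $\phi=\delta f$ is a coboundary and the cocycle identity $\delta\phi=0$ combined with symmetry yields trilinearity; your identity $D_aF_b+D_bF_a=\beta(a,b)$ (constant in $y$) is exactly the content of $\delta\phi=0$ specialised and symmetrised, so the two proofs buy the same thing, yours avoiding the cohomological vocabulary. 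I verified the key identities: $h'_b(y,a)=h'_a(y,b)$ does give $f(y+a,b)+f(y,b)+f(y+b,a)+f(y,a)=\beta(a,b)$, the reduction of Definition \ref{defalpha}(iii) to $\phi(x,y,z)=\beta(x+y,z)+\beta(x,z)+\beta(y,z)=h'_z(x,y)+h_z(x,y)$ is right, and the expansion of $\phi(a+b,y,z)+\phi(a,y,z)+\phi(b,y,z)$ does collapse to $D_z\bigl(D_aF_b+D_bF_a\bigr)(y)=0$ after one or two further applications of the $h'$-symmetry (it is not quite the ``direct expansion'' you advertise: the raw expansion gives $D_aD_bF_z(y)+h'_{y+z}(a,b)+h'_y(a,b)$ and needs the symmetry to be put in your form). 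Two small imprecisions worth fixing: the single specialization $\phi(x,y,y)=\phi(y,y,x)$ only yields $h_y(x,y)=h'_y(x,y)$; you also need $\phi(x,y,y)=\phi(y,x,y)$, which gives $h_y(x,y)=0$, before you can conclude $f(x+y,y)=f(x,y)+f(y,y)$. And the normalization $f(0,\cdot)=f(\cdot,0)=0$ is an assumption built into the paper's claim that $u_0$ is the unit rather than a consequence of it; it is harmless but should be stated as such. The closing ANF/polarization formula for $\alpha$ and the equivalence ``fourth differences vanish $\Leftrightarrow\deg\alpha\le 3$'' are standard and correctly invoked.
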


This result defines a class of twisted group algebras.
Every algebra of this class is characterized by a cubic form on $\Z_2^n$.
We will need the following converse result. 

\begin{prop}\label{uniqA} 
Given any cubic function $\alpha: \bbZ_2^n \longrightarrow \bbZ_2$ there exists a unique (up to isomorphism) twisted group algebra~$\mathcal{A}$ having $\alpha$ as a generating function.
\end{prop}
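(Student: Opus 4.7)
The plan is to establish existence by constructing $f$ explicitly from $\alpha$, and uniqueness by identifying the difference of any two valid twisting functions with a $2$-coboundary on $\Z_2^n$.

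\emph{Existence.} Since $\alpha$ is cubic on $\Z_2^n$ with $\alpha(0)=0$ (needed so that $u_0$ is a unit in the algebra), write
\[
\alpha(x) \;=\; \sum_{i} a_i\, x_i \;+\; \sum_{i<j} b_{ij}\, x_i x_j \;+\; \sum_{i<j<k} c_{ijk}\, x_i x_j x_k .
\]
Mirroring the patterns of \eqref{fclassic} and \eqref{fclassic2}, I would define
\[
f(x,y) \;:=\; \sum_{i} a_i\, x_i y_i \;+\; \sum_{i<j} b_{ij}\, x_i y_j \;+\; \sum_{i<j<k} c_{ijk}\, (x_i x_j y_k + x_i y_j x_k + y_i x_j x_k).
\]
Using $x_i^2 = x_i$ in $\Z_2$ one reads off $f(x,x)=\alpha(x)$, and the identities (ii)--(iii) of Definition~\ref{defalpha} for the associated $\beta$ and $\phi$ are straightforward polynomial expansions verified monomial by monomial. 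This produces a twisted group algebra having $\alpha$ as a generating function.

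\emph{Uniqueness.} Suppose two twisted group algebras $(\bbR[\Z_2^n],f)$ and $(\bbR[\Z_2^n],f')$ both admit $\alpha$ as a generating function, and set $g := f - f'$ modulo $2$. Subtracting the three identities of Definition~\ref{defalpha} applied to $f$ and $f'$ yields
\[
g(x,x)=0, \qquad g(x,y)=g(y,x), \qquad g(y,z)+g(x+y,z)+g(x,y+z)+g(x,y)=0.
\]
The main step is to produce $h:\Z_2^n\to\Z_2$ with $g(x,y)=h(x+y)+h(x)+h(y)$. For this I would form $E:=\Z_2\times\Z_2^n$ with the binary operation
\[
(a,x)*(b,y):=\bigl(a+b+g(x,y),\,x+y\bigr).
\]
The cocycle identity makes $*$ associative, symmetry of $g$ makes it commutative, and $g(x,x)=0$ makes every element of $E$ of order $2$. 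Hence $E$ is a $\Z_2$-vector space of dimension $n+1$, and the exact sequence $0\to\Z_2\to E\to\Z_2^n\to 0$ splits linearly. A section $s(x)=(h(x),x)$, being a group homomorphism, unravels exactly to the coboundary identity for $g$. Finally, the assignment $u'_x\mapsto(-1)^{h(x)}u_x$ converts the multiplication law of $f'$ into that of $f$ and is plainly a $\Z_2^n$-graded isomorphism.

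\emph{Main obstacle.} The decisive point is the splitting step for the extension $E$: one must recognize $g$ as the cocycle of an elementary abelian $2$-group, which is precisely where $g(x,x)=0$ (reflecting that $\alpha$ determines the squares $u_x^2=(-1)^{\alpha(x)}$) enters and forces the extension to be trivial.
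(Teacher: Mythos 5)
Your proof is correct, but your uniqueness argument follows a genuinely different route from the paper's. The existence half coincides with the paper (it cites~\cite{MGO2011} and recalls exactly your substitution $x_ix_jx_k\mapsto x_ix_jy_k+x_iy_jx_k+y_ix_jx_k$, $x_ix_j\mapsto x_iy_j$, $x_i\mapsto x_iy_i$, i.e.\ formula \eqref{Fakset}). For uniqueness, however, the paper argues on the level of generators: the coefficients of $\alpha$ determine all degree-$2$ and degree-$3$ relations (squares, (anti)commutation, (anti)association of the $u_i$), and then the sign in any product $u'_x\cdot u'_y=\pm u'_{x+y}$ of PBW-type monomials is forced by those relations, so all structure constants --- hence the twisting function up to the choice of basis --- are determined. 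You instead work cohomologically: the difference $g=f+f'$ of two admissible twisting functions is a symmetric $2$-cocycle on $\Z_2^n$ vanishing on the diagonal, the associated extension $E$ of $\Z_2^n$ by $\Z_2$ is then an elementary abelian $2$-group (note only that the cocycle identity with $x=y=0$ together with $g(0,0)=0$ gives $g(0,z)=0$, so $(0,0)$ is indeed the identity and $h(0)=0$), a linear section exhibits $g$ as a coboundary $\delta h$, and the rescaling $u'_x\mapsto(-1)^{h(x)}u_x$ is the desired graded isomorphism --- I checked the sign bookkeeping and it is consistent. Your approach buys a cleaner conceptual statement (the graded isomorphism class of a twisted group algebra depends only on $f$ modulo coboundaries, and conditions (i)--(iii) of Definition~\ref{defalpha} pin $f$ down exactly up to a coboundary) and an explicit isomorphism by signs on the homogeneous basis; the paper's argument buys a more elementary, presentation-oriented proof that simultaneously produces a canonical PBW basis and ties uniqueness directly to the second definition of $\bbO_{p,q}$ by generators and relations.
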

\begin{proof}
The existence of such twisted group algebra was proven in \cite{MGO2011}. 
There is a canonical way 
to construct a twisting function $f$ for which $\alpha$ will be a generating function.
Let us recall the construction. One writes explicitly $f(x,y)$ from the expression of $\alpha(x)$ 
according to the following procedure:
\begin{equation}
\label{Fakset}
\begin{array}{rcl}
x_ix_jx_k&\longmapsto&x_ix_jy_k+x_iy_jx_k+y_ix_jx_k,\\[4pt]
x_ix_j&\longmapsto&x_iy_j,\\[4pt]
x_i&\longmapsto&x_iy_i.
\end{array}
\end{equation}
where $1 \leq i<{}j<{}k \leq n$.

Assume that the twisted group algebra $\cA$ has a generating cubic form $\alpha$.
Let us now prove that $\cA$ is unique.

First, notice that $\alpha$ uniquely determines the relations of degree $2$ and $3$
between the generators $u_i$.
Indeed, $u_i^2=-1$ if and only if $\alpha$ contains the linear term $x_i$
(otherwise $u_i^2=1$);
$u_i$ and $u_j$ anticommute if and only if $\alpha$ contains the quadratic term $x_ix_j$
(otherwise, they commute);
$u_i \cdot (u_j \cdot u_k)=-(u_i \cdot u_j) \cdot u_k$ if and only if $\alpha$ contains the cubic term $x_ix_jx_k$
(otherwise, the generators associate).

The monomials
$$
u'_x=u_{i_1} \cdot (u_{i_2} \cdot (\;\cdots\; (u_{i_{l-1}} \cdot u_{i_{l}})\cdots)),
$$
for $x=e_{i_1}+e_{i_2}+\ldots+e_{i_l}$ with $i_1<i_2<\ldots<i_l$
form a (Poincar\'e-Birkhoff-Witt) basis of the algebra.
The product $u'_x \cdot u'_y$ of two such monomials
is equal to the monomial $\pm u'_{x+y}$.
The sign can be determined by
using only sequences of commutation and association between the generators, 
and the squares of the generators. 
Therefore the structure constants related to this basis are completely determined.
A twisting function $f'$ is deduced from
$$
u'_x \cdot u'_y=:(-1)^{f'(x,y)}u'_{x+y}.$$
\end{proof}

\begin{rem}
(i)
Two isomorphic algebras may have different generating functions.
For instance the algebra determined by 
$$
\alpha (x) = \sum_{1 \leq i < j \leq n-1}  x_i x_j x_n   + \sum_{1 \leq i  \leq j \leq n} x_i x_j + \sum_{1 \leq i \leq p }x_i.
$$
is isomorphic to $\bbO_{p,q}$, when $q>0$ (by sending $u_i$ to $u_{e_i+e_n}$, $1 \leq i  \leq n-1$, and $u_n$ to $u_n$.)

(ii)
In \cite{MGO2011}, the definition of generating function was given in a slightly more general situation of a quasialgebra $\cA$,
and therefore did not include the condition $(i)$ in the definition. 
The condition $(i)$ will also allow us to determine uniquely the algebra~$\cA$.
\end{rem}

\subsection{The generating cubic form of the algebra $\bbO_{p,q}$}

The algebras $\bbO_{p,q}$ (as well as the Clifford algebras) have generating cubic forms given by 
$$
\alpha_{p,q} (x) = f_{\bbO_{p,q}} (x,x) = \alpha_n (x) + \sum_{1\leq{}i\leq{}p} x_i ,
$$
where
$$
\alpha_n (x) = \sum_{1 \leq i < j < k \leq n}  x_i x_j x_k   + 
\sum_{1 \leq i  <j \leq n} x_i x_j + \sum_{1 \leq i \leq n }x_i.
$$

The cubic form  $\alpha_n$ is invariant under the action of
the group of permutations of the coordinates. 
Therefore, the value $\alpha_n (x)$ depends only on the weight 
(i.e. the number of nonzero components) of $x$ and can be easily computed. 
More precisely, for $x=(x_1, \ldots,x_n)$ in $\bbZ_2^n$, we denote the
{\it Hamming weight} of $x$ by
$$
|x|:=\#\{x_i\not=0\}.
$$
One has
\begin{eqnarray}\label{alphastand}
\alpha_n (x) = 
\left\{\begin{array}{cl}
0 , & \mbox{ if} \hspace{0.5cm}  |x| \equiv 0 \mbox{ mod } 4,   \\
1  ,& \mbox{ } \hspace{0.75cm}   \mbox{otherwise}.     
\end{array}\right.
\end{eqnarray}

The existence of a generating cubic form gives a way to 
distinguish the algebras $\mathrm{Cl}_{p,q}$ and $\bbO_{p,q}$
from other twisted group algebras.
For instance, the Cayley-Dickson algebras do not have generating cubic forms.
Let us also mention that
the generating cubic form is a very useful tool for the study of the algebras.

\subsection{Nonassociative extension of Clifford algebras, the third definition of $\bbO_{p,q}$} \label{cliffext}

Let us describe the third way to define the algebras $\bbO_{p,q}$.

Consider the subalgebra of the algebra $\bbO_{p,q}$
consisting in the elements of even degree.
The basis of this subalgebra is as follows 
$$
\{u_x: \;x\in (\Z_2)^n, |x|\equiv 0 \mod 2\}.
$$

\begin{prop}
The subalgebra of $\bbO_{p,q}$ of even elements
is isomorphic to the
Clifford algebra $\mathrm{Cl}_{p,q-1}$, for $q>0$.
\end{prop}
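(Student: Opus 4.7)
My strategy is to use the characterization of such algebras via their generating cubic forms (Proposition~\ref{uniqA}): I will compute the generating function of the even subalgebra in suitable coordinates and recognize it as the generating function of $\mathrm{Cl}_{p,q-1}$, so uniqueness finishes the job.

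First, I would identify the even subgroup $H:=\{x\in\Z_2^n:|x|\equiv 0\bmod 2\}$ with $\Z_2^{n-1}$ by choosing the basis $(e_i+e_n)_{1\le i\le n-1}$, which is where the hypothesis $q>0$ is used (we want $e_n$ in a legitimate coordinate, and the Clifford signature will naturally lose one negative generator). Concretely, an element of $H$ has coordinates $(y_1,\ldots,y_{n-1},\,s)$ with $s=y_1+\cdots+y_{n-1}\bmod 2$. The even subalgebra, restricted from the ambient twisted group algebra, is itself a twisted group algebra over $H\cong\Z_2^{n-1}$, and since the ambient $\phi$ is symmetric it admits a generating function in the sense of Definition~\ref{defalpha}, namely the restriction $\alpha_{p,q}|_H$.

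Next I would plug the parameterization into $\alpha_{p,q}(x)=\alpha_n(x)+\sum_{i\le p}x_i$. The linear piece $\sum_{i\le p}x_i$ immediately becomes $\sum_{i\le p}y_i$ because $p<n$. The decisive computation is for $\alpha_n$: splitting each sum $\sum_{i<j}x_ix_j$ and $\sum_{i<j<k}x_ix_jx_k$ according to whether the last index equals $n$ or not, and using $s^2=s$ and $s\cdot\sum_{i<j<n}y_iy_j=\sum_{1\le a<b<c\le n-1}y_ay_by_c\pmod 2$ (a straightforward parity count on triples), one finds that the cubic contributions from the two cases cancel pairwise modulo $2$, and similarly the linear contributions cancel. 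What remains is exactly
\[
\alpha_{p,q}|_H(y)=\sum_{1\le i<j\le n-1}y_iy_j+\sum_{p<i\le n-1}y_i,
\]
which is precisely the quadratic generating function $\alpha_{\mathrm{Cl}_{p,q-1}}$ (computed from $f_{\mathrm{Cl}_{p,q-1}}(y,y)$ with the reduction $y_i^2=y_i$). Applying Proposition~\ref{uniqA} then gives an isomorphism of the even subalgebra with $\mathrm{Cl}_{p,q-1}$.

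The only real obstacle is the cubic cancellation; everything else is bookkeeping. As a sanity check (and a concrete alternative proof if one prefers to bypass Proposition~\ref{uniqA}), one can verify directly on the generators $v_i:=u_{e_i+e_n}$ that $v_i^2=+1$ for $i\le p$ and $v_i^2=-1$ for $p<i\le n-1$ (using $\alpha_{p,q}(e_i+e_n)=1+[i\le p]$), that $v_iv_j=-v_jv_i$ for $i\neq j$ (from $\beta(e_i+e_n,e_j+e_n)=1$), and that $\phi(e_i+e_n,e_j+e_n,e_k+e_n)=0$ for distinct $i,j,k$; this yields a surjection $\mathrm{Cl}_{p,q-1}\twoheadrightarrow$ even subalgebra, which is an isomorphism by a dimension count ($2^{n-1}$ on both sides).
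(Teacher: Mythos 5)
Your main argument is correct, and it takes a genuinely different route from the paper. The paper works with the generators $v_i:=u_{e_i+e_n}$ directly: it checks the presentation relations \eqref{cliffrel} ($v_i^2=\pm1$ with signature $(p,q-1)$, anticommutation, association), notes that tri-additivity of $\phi$ forces the algebra they generate to be associative, and concludes by the defining presentation of $\mathrm{Cl}_{p,q-1}$ --- i.e.\ exactly your ``sanity check'' variant. Your primary route instead treats the even part as a twisted group algebra over $H\cong\Z_2^{n-1}$, computes the restricted generating function in the coordinates $y_i$, and identifies it with $\alpha_{\mathrm{Cl}_{p,q-1}}(y)=\sum_{i<j}y_iy_j+\sum_{p<i\le n-1}y_i$, then invokes the uniqueness statement of Proposition~\ref{uniqA}. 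I verified the key identity independently (via the Hamming-weight formula \eqref{alphastand}: with $w=|y|$ one has $\alpha_n(x(y))=1$ iff $w\equiv1,2 \bmod 4$, which matches $\binom{w}{2}+w \bmod 2$), so your claimed restriction formula is right, and the appeal to Proposition~\ref{uniqA} is legitimate since conditions (i)--(iii) of Definition~\ref{defalpha} are inherited by restriction to the subgroup $H$. What each approach buys: the paper's argument is shorter and stays at the level of generators and relations, while yours fits the classification philosophy of the paper (algebras determined by their cubic forms) and is the more systematic tool when the subalgebra is not handed to you with an obvious generating set. One small caveat on your alternative check: to get the surjection $\mathrm{Cl}_{p,q-1}\twoheadrightarrow$ even part from the universal property you need associativity of the whole even subalgebra (or at least of the subalgebra generated by the $v_i$), not only $\phi(e_i+e_n,e_j+e_n,e_k+e_n)=0$ on generator triples; this follows at once from tri-additivity and alternation of $\phi$, which is precisely the sentence the paper inserts for this purpose, so you should state it explicitly.
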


\begin{proof}
Indeed, consider the following elements: 
\[ 
v_i := u_{e_i+e_{n}} ,
\quad \mbox{for all }  1 \leq i \leq n -1.
\]
They generate all the even elements and they satisfy
\[\left\{\begin{array}{ccll}
v_i^2&=& 1, & \mbox{ if} \hspace{0.5cm}  1 \leq i \leq p , \\ 
v_i^2&=& -1, & \mbox{ if} \hspace{0.5cm}  p+1 \leq i \leq n-1,  \\ 
v_{i} \cdot v_j&=&-v_{j} \cdot  v_i ,& \mbox{ for all } \hspace{0.5cm}  i \neq j  \leq n-1 ,\\ 
v_{i} \cdot  (v_j \cdot  v_k)&=&(v_i \cdot   v_j)\cdot   v_k, & \mbox{ for all } \hspace{0.5cm} i , j , k  \leq n-1 . \\ 
\end{array}\right.\]
Linearity of the function $\phi$ implies that $v_i$ generate an associative algebra.
The above system of generators is
therefore a presentation for the real Clifford algebra $\mathrm{Cl}_{p,q-1}$.
\end{proof}

In other words, the algebra $\bbO_{p,q}$, for $q\not=0$, contains 
the following  {\it Clifford subalgebra}:
\begin{equation}
\label{CliffSub}
\mathrm{Cl}_{p,q-1}\subset\bbO_{p,q}.
\end{equation}
The algebra $\bbO_{p,q}$ thus can be viewed
as a nonassociative extension of this Clifford subalgebra  by one odd element, $u_n$,
anticommuting and antiassociating with all the elements of the Clifford algebra,
except for the unit.
As a vector space, 
$$
\bbO_{p,q}\simeq\mathrm{Cl}_{p,q-1}
\oplus \left( \mathrm{Cl}_{p,q-1}\cdot u_n \right).
$$
We will use the notation:
$\bbO_{p,q}\simeq\mathrm{Cl}_{p,q-1}\star u_n.$
This viewpoint is illustrated
by Figure \ref{HandO} in the case of quaternions and octonions. 

\begin{figure}[!h]
\begin{center}
\includegraphics[width=11cm]{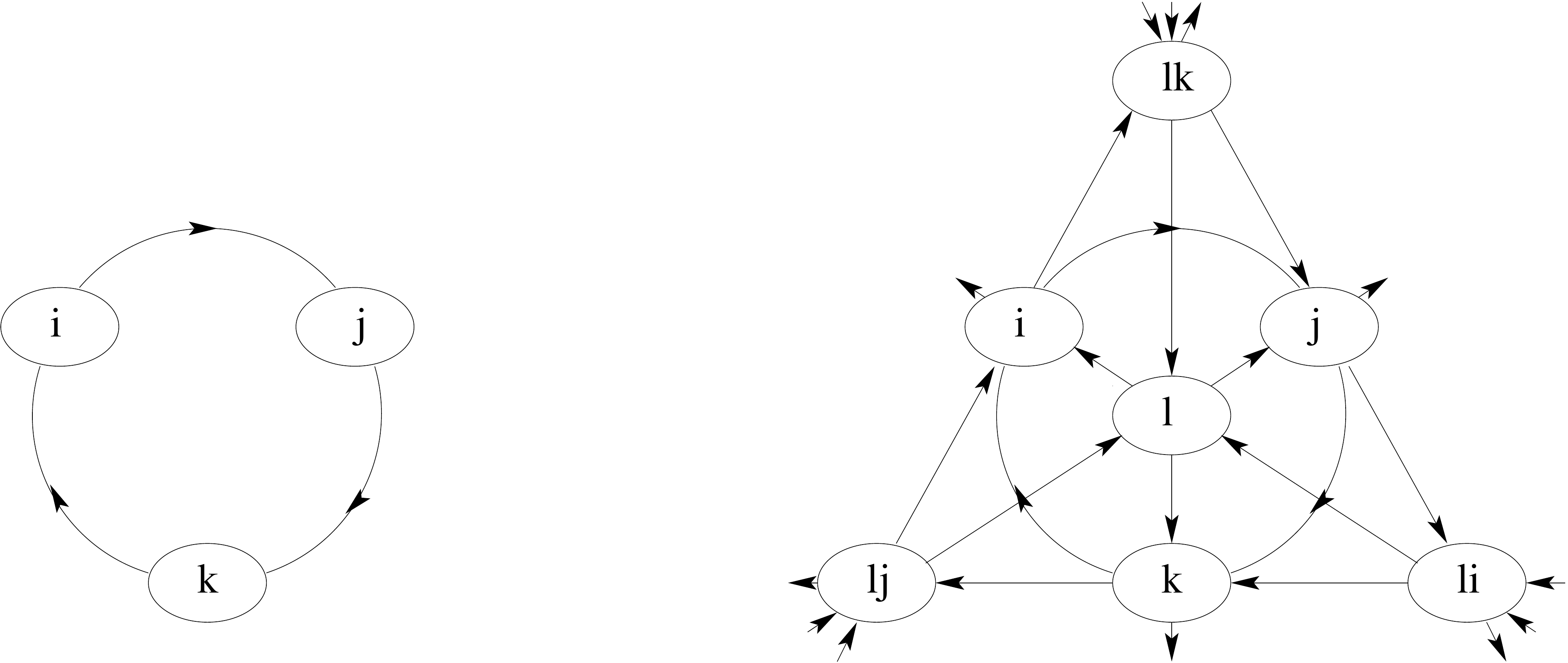}
\end{center}
\caption{Multiplication in the algebras of quaternions (left) and  of octonions (right)}
\label{HandO}
\end{figure}

\subsection{Comparison of $\mathrm{Cl}_{p,q}$ and  $\bbO_{p,q}$}
As mentioned in the previous section, there is a strong analogy between the algebras 
$\mathrm{Cl}_{p,q}$ and  $\bbO_{p,q}$
in terms of the various possible presentations of the algebras. 
Comparison of results on $\mathrm{Cl}_{p,q}$ and  $\bbO_{p,q}$ from \cite{MGO2011}, 
can be expressed in the following table.

\medskip
\begin{tabular}{c|c}
 Clifford algebras & Algebras  $\bbO_{p,q}$\\[8pt]
   \hline
 $\mathrm{Cl}_{p,q}$ is simple if and only if:  &  $\bbO_{p,q}$  is simple if and only if: \\
$p+q \not\equiv 1 \mod 2$& $p+q \not\equiv 0 \mod 4$\\
or& or\\
$p+q \equiv 1 \mod 2$ and $p-q\equiv 3 \mod 4$ &$p+q \equiv 0 \mod 4$ and $p,q$ even\\[8pt]
   \hline
 if $\mathrm{Cl}_{p,q}$ is not simple, then  & if $\bbO_{p,q}$  is not simple, then \\
$\mathrm{Cl}_{p,q}\simeq \cA\oplus \cA $,& $\bbO_{p,q}\simeq \cA\oplus \cA$,\\
where& where\\
$\cA \simeq \mathrm{Cl}_{p-1,q}\simeq \mathrm{Cl}_{p,q-1}$&$\cA \simeq\bbO_{p-1,q}\simeq \bbO_{p,q-1}$\\[8pt]
\hline
 if $\mathrm{Cl}_{p,q}$ is simple and $p+q \equiv 1 \mod 2$,& if $\bbO_{p,q}$  is simple and $p+q \equiv 0 \mod 4$,  \\
then $\mathrm{Cl}_{p,q}\simeq \cA $,& then  $\bbO_{p,q}\simeq \cA$,\\
where& where\\
$\cA\simeq \mathrm{Cl}_{p,q-1}\otimes \C\simeq \mathrm{Cl}_{p-1,q}\otimes \C$&$\cA\simeq\bbO_{p,q-1}\otimes \C\simeq\bbO_{p-1,q}\otimes \C$\\
($\cA$ is a $\C$-algebra independent of $(p,q)$)&($\cA$ is a $\C$-algebra independent of $(p,q)$)\\[8pt]
\end{tabular}

\section{Classification} \label{Sectionclassification}

In this section, we formulate and prove our main results. 
The problem we consider is to classify the isomorphisms
$\bbO_{p,q}\simeq \bbO_{p',q'}$
that preserve the structure of $\Z_2^n$-graded algebra.
This means, the isomorphisms sending 
homogeneous elements into homogeneous. 
We conjecture that whenever there is no isomorphism
preserving the structure of $\Z_2^n$-graded algebra
the algebras are not isomorphic.

In the end of the section, we present a table illustrating the results.

\subsection{Statement of the main results} \label{res}

Our main result is as follows.

\begin{thm}
\label{thmIso}
If $pq\not=0$, then there are the following isomorphisms of graded algebras:
\begin{enumerate}
\item[(i)] 
$\bbO_{p,q}\simeq \bbO_{q,p}\;$;
\item[(ii)] 
$\bbO_{p,q+4}\simeq \bbO_{p+4,q}\;$;
\item[(iii)] 
Every isomorphism between the algebras $\bbO_{p,q}$
preserving the structure of $\Z_2^n$-graded algebra is a combination of the above isomorphisms.
\item[(iv)] 
For $n\geq 5$, the algebras $\bbO_{n,0}$ and $\bbO_{0,n}$ are not isomorphic,
and are not isomorphic to any other algebras $\bbO_{p,q}$ with $p+q=n$.
\end{enumerate}
\end{thm}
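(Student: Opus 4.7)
My approach is to leverage Proposition~\ref{uniqA}: each graded algebra $\bbO_{p,q}$ is determined by its generating cubic form $\alpha_{p,q}$ on $\bbZ_2^n$, and a graded isomorphism $\bbO_{p,q}\simeq\bbO_{p',q'}$ amounts to a linear automorphism $\psi\in GL_n(\bbZ_2)$ satisfying $\alpha_{p,q}\circ\psi=\alpha_{p',q'}$. The proof therefore reduces to classifying the $GL_n(\bbZ_2)$-orbits of the forms $\alpha_{p,q}$. It will be convenient to write
$$
\alpha_{p,q}(x) \;=\; e_3(x) + e_2(x) + \ell_{p,q}(x),
$$
where $e_k$ denotes the $k$-th elementary symmetric polynomial and $\ell_{p,q}(x)=x_{p+1}+\cdots+x_n$ (obtained by combining $e_1$ from $\alpha_n$ with $\sum_{i\le p}x_i$ mod~$2$). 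Since any $\psi\in GL_n(\bbZ_2)$ preserves polynomial degree, it must stabilize the cubic-plus-quadratic part $e_3+e_2$ and map $\ell_{p,q}$ to $\ell_{p',q'}$. The whole theorem thus becomes an orbit question on linear forms under the subgroup $H:=\mathrm{Stab}_{GL_n(\bbZ_2)}(e_3+e_2)$.

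\textbf{Parts (i) and (ii).} To establish the stated isomorphisms, I would construct explicit elements of $H$. A natural family is given by the maps $\psi_s:e_i\mapsto e_i+s$, i.e.\ $\psi_s(x)=x+e_1(x)\cdot s$, parameterized by $s\in\bbZ_2^n$. Using the identity $(e_3+e_2)(y)\equiv\binom{|y|}{3}+\binom{|y|}{2}\equiv[\,|y|\equiv 2\!\!\pmod 4\,]$, a direct expansion pins down which $s$ produce $\psi_s\in H$ and how the induced action shifts $\ell_{p,q}$. Composed with coordinate permutations (which obviously lie in $H$), suitable $s$'s realize (i)~the complement involution swapping $(p,q)\leftrightarrow(q,p)$ and (ii)~the weight-4 shift $(p,q+4)\leftrightarrow(p+4,q)$. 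As an independent cross-check, (ii) can also be derived from the Clifford subalgebra description $\bbO_{p,q}\simeq\mathrm{Cl}_{p,q-1}\star u_n$ together with the classical 8-periodicity of the Clifford algebras $\mathrm{Cl}_{p,q}$.

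\textbf{Parts (iii) and (iv).} For the negative statements, I would use the $GL_n(\bbZ_2)$-invariant $N_1(\alpha):=\#\{x\in\bbZ_2^n:\alpha(x)=1\}$. Thanks to the decomposition above, $N_1(\alpha_{p,q})$ splits as a sum of binomial coefficients indexed by $|x\cap T_{p,q}|$ and $|x|$, with $T_{p,q}=\{p+1,\ldots,n\}$, and is straightforward to tabulate. A direct calculation shows that $N_1(\alpha_{p,q})$ depends only on the orbit of $(p,q)$ under the moves of (i)--(ii), and that distinct such orbits produce distinct $N_1$ values, giving (iii). For (iv), the cubic forms degenerate in linear part, namely $\alpha_{n,0}=e_3+e_2$ and $\alpha_{0,n}=e_3+e_2+e_1$, so that $\ell=0$ or $\ell=e_1$ respectively, whereas $\ell_{p,q}$ has strictly intermediate weight $q\in\{1,\ldots,n-1\}$ when $pq\neq 0$. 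For $n\ge 5$ the values $N_1(\alpha_{n,0})$ and $N_1(\alpha_{0,n})$ differ from each other and from every $N_1(\alpha_{p,q})$ with $pq\neq 0$, which yields the exceptionality; the bound $n\ge 5$ is sharp because in small cases a 4-shift can bring the boundary weights $0$ or $n$ into the orbit of an interior weight.

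\textbf{Expected main obstacle.} The technical heart of the proof is part (iii): showing that $N_1$ (or, if needed, a finer invariant such as refined distributions of the cubic form on cosets of a suitable subspace) truly separates all orbits, so that (i)--(ii) are exhaustive rather than merely sufficient. The construction of the isomorphisms in (i)--(ii) is expected to reduce to a clean calculation once the right $s$ is identified, and (iv) should follow by tabulating the invariant. Controlling $H=\mathrm{Stab}(e_3+e_2)$ finely enough to rule out ``hidden'' linear-form equivalences, rather than merely invoking weight-mod-4 heuristics, is where the real combinatorial work will lie.
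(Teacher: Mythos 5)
Your overall strategy is the same as the paper's: realize the isomorphisms by explicit linear changes of variables on the generating cubic form (justified via Proposition~\ref{uniqA}), and rule out all other graded isomorphisms with the counting invariant $N_1(\alpha_{p,q})=\#\{x:\alpha_{p,q}(x)=1\}$, which is literally the paper's ``statistics'' $s(p,q)$ together with Lemma~\ref{EtOui!}. Your part (i) is correct: for $|s|$ even one has $(e_3+e_2)\circ\psi_s=e_3+e_2$ (both sides vanish on odd-weight vectors and $\psi_s$ fixes even-weight vectors) and $\ell_{p,q}\circ\psi_s=\ell_{p,q}+\ell_{p,q}(s)\,e_1$, so choosing $s$ with $\ell_{p,q}(s)=1$ --- possible exactly when $pq\neq0$ --- and composing with a permutation yields $\alpha_{q,p}$. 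Parts (iii)--(iv) are the paper's argument modulo actually carrying out the binomial tabulation in each congruence class of $n$ mod $4$.

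The genuine gap is in part (ii), and it originates in the reduction itself. Over $\bbZ_2[x]/(x_i^2-x_i)$ a linear substitution preserves only the degree \emph{filtration}, not the grading: the degree-$\geq 2$ part of $\alpha_{p,q}\circ\psi$ can equal $e_3+e_2$ while $(e_3+e_2)\circ\psi\neq e_3+e_2$, the discrepancy being a \emph{linear} form that then corrects $\ell$. Restricting to the exact stabilizer $H=\mathrm{Stab}(e_3+e_2)$ is therefore not a legitimate reduction, and within the family you actually propose nothing can produce the $4$-shift: the computation above shows that, up to permutations, the orbit of $\alpha_{p,q}$ under all the $\psi_s$ is just $\{\alpha_{p,q},\alpha_{q,p}\}$, so no choice of $s$ realizes $(p,q+4)\leftrightarrow(p+4,q)$. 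The transformations that do the job in the paper are of a different type, e.g.\ $e_i\mapsto e_{j_1}+e_{j_2}+e_{j_3}$ on a block $B$ of four indices and the identity elsewhere; such a map sends $x$ with $\sum_{i\in B}x_i=1$ to a vector whose weight changes by $2$ mod $4$, hence does \emph{not} stabilize $e_3+e_2$, and it is precisely its linear residue that moves the signature. So for (ii) you must either enlarge your group to $\{\psi:(e_3+e_2)\circ\psi\equiv e_3+e_2 \bmod \text{linear forms}\}$ with its induced affine action on $\ell$, or exhibit the block-type generators explicitly. Your Clifford cross-check does not close the gap either: an abstract isomorphism of the Clifford subalgebras does not automatically extend to a \emph{graded} isomorphism of the extensions $\mathrm{Cl}_{p,q-1}\star u_n$; one must produce homogeneous Clifford generators that still anticommute and antiassociate with $u_n$, which is again an explicit construction.
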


Part (i) of the theorem gives a ``vertical symmetry'' with respect to $p-q=0$ and may be 
compared to the vertical symmetry with respect to $p-q=1$ in the case of Clifford algebras:
$$
\mathrm{Cl}_{p,q}\simeq \mathrm{Cl}_{q+1,p-1}.
$$
Part (ii) gives a periodicity modulo $4$ that also holds in the Clifford case:
$$
\mathrm{Cl}_{p+4,q} \simeq\mathrm{Cl}_{p,q+4}.
$$

Another way to formulate Theorem \ref{thmIso} is as follows.

\begin{cor}
\label{CorIso}
Assume that $p,p',q,q'\not=0$, then
$\bbO_{p,q}\simeq\bbO_{p',q'}$ if and only if
the corresponding Clifford subalgebras \eqref{CliffSub} are isomorphic;
the algebras $\bbO_{n,0}$ and $\bbO_{0,n}$ are exceptional.
\end{cor}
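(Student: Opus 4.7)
The plan is to derive the corollary directly from Theorem \ref{thmIso} by translating the $\bbO$-isomorphism relation into a statement about the Clifford subalgebras \eqref{CliffSub}, and then invoking the classical classification of real Clifford algebras. For the ``only if'' direction, I would first check that the two elementary isomorphisms in Theorem \ref{thmIso}(i)--(ii), as produced in its proof, preserve the parity subgrading of $\Z_2^n$ and so restrict to isomorphisms of the even subalgebras. Explicitly, $\bbO_{p,q}\simeq\bbO_{q,p}$ would restrict to $\mathrm{Cl}_{p,q-1}\simeq\mathrm{Cl}_{q,p-1}$, which is an instance of the classical Clifford identity $\mathrm{Cl}_{a,b}\simeq\mathrm{Cl}_{b+1,a-1}$ applied at $(a,b)=(p,q-1)$, and $\bbO_{p+4,q}\simeq\bbO_{p,q+4}$ would restrict to $\mathrm{Cl}_{p+4,q-1}\simeq\mathrm{Cl}_{p,q+3}$, matching the periodicity $\mathrm{Cl}_{a+4,b}\simeq\mathrm{Cl}_{a,b+4}$. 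Combined with Theorem \ref{thmIso}(iii), this gives the forward implication.

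For the converse I would invoke the Atiyah--Bott--Shapiro classification of real Clifford algebras: within fixed dimension $n-1$, the isomorphism type of $\mathrm{Cl}_{p,q-1}$ is determined by $k := p-q+1 \bmod 8$, subject to the coincidences $\{3,7\}$ when $n$ is even and $\{0,2\}$, $\{4,6\}$ when $n$ is odd. The $\bbO$-symmetry (i) induces on $k$ the involution $k\leftrightarrow 2-k$, which reproduces exactly these Clifford coincidences, while (ii) preserves $k \bmod 8$ and supplies the translations needed to move between different $(p,q)$-representatives of a given $k$. A Clifford isomorphism $\mathrm{Cl}_{p,q-1}\simeq\mathrm{Cl}_{p',q'-1}$ can therefore always be realized by a finite sequence of $\bbO$-symmetries, yielding $\bbO_{p,q}\simeq\bbO_{p',q'}$.

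The main delicate point is to guarantee that a connecting sequence of $\bbO$-symmetries can be chosen while every intermediate $(p_i,q_i)$ stays in the region $p_i,q_i\geq 1$, so that the Clifford subalgebra is defined at each step; this is the role of the hypothesis $pq\neq 0$, $p'q'\neq 0$, and it reduces to a finite combinatorial check for each parity of $n$. The boundary algebras $\bbO_{n,0}$ and $\bbO_{0,n}$ have no Clifford subalgebra of the form $\mathrm{Cl}_{p,q-1}$ and are separated from the rest of the series by Theorem \ref{thmIso}(iv), which is precisely the reason they appear here as exceptional.
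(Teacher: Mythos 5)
Your derivation is, in substance, the paper's own: the corollary is stated there without a separate proof, as a reformulation of Theorem \ref{thmIso} combined with the classical classification of real Clifford algebras, and your bookkeeping with $k=p-q+1 \bmod 8$ (coincidences $\{3,7\}$ for $n$ even, $\{0,2\}$ and $\{4,6\}$ for $n$ odd), together with the observation that move (i) acts by $k\mapsto 2-k$ and move (ii) fixes $k$, and the check that connecting sequences can be kept inside the region $p_i,q_i\geq 1$, is exactly what that reformulation amounts to.

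One caveat: the opening step of your forward direction --- that the isomorphisms constructed for Theorem \ref{thmIso}(i)--(ii) preserve the parity of the $\Z_2^n$-grading and hence restrict to isomorphisms of the even (Clifford) subalgebras --- is not true of the paper's explicit constructions, so the ``check'' you propose would fail. For instance, in the lemma treating $n\equiv 3 \pmod 4$ one of the new generators is $u_{w_1}$ with $w_1=(1,\dots,1,0)$ of even Hamming weight, so the induced automorphism of $\Z_2^n$ sends an odd-degree generator to an even-degree element and does not map the even subalgebra to the even subalgebra; a graded isomorphism need not preserve the parity character at all. Fortunately this step is also unnecessary: as you yourself note, under move (i) the subalgebras $\mathrm{Cl}_{p,q-1}$ and $\mathrm{Cl}_{q,p-1}$ are isomorphic by the classical identity $\mathrm{Cl}_{a,b}\simeq\mathrm{Cl}_{b+1,a-1}$, and under move (ii) $\mathrm{Cl}_{p+4,q-1}\simeq\mathrm{Cl}_{p,q+3}$ by the periodicity $\mathrm{Cl}_{a+4,b}\simeq\mathrm{Cl}_{a,b+4}$; combined with Theorem \ref{thmIso}(iii) this already yields the ``only if'' direction. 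With that sentence removed (and the exceptional algebras dispatched by Theorem \ref{thmIso}(iv), as you do), your proof is correct.
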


\subsection{Construction of the isomorphisms} \label{subsectionisomorphisms}

In this section, we establish a series of lemmas that provides all possible 
isomorphisms between the algebras $\bbO_{p,q}$.
Most of them have to be treated according to the congruence class of $n=p+q$ modulo 4.

We start with the algebras of small dimensions.

\begin{lem}
\label{ex1}
For $n=3,$ one has: 
\[\ \bbO_{3,0} \simeq \bbO_{2,1} \simeq  \bbO_{1,2} \not\simeq \bbO_{0,3}.\]
\end{lem}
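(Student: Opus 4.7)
My plan is to combine the uniqueness assertion of Proposition \ref{uniqA} with a direct analysis of the generating cubic forms $\alpha_{p,q}$ on $\Z_2^3$. A graded isomorphism $\psi:\bbO_{p,q}\to\bbO_{p',q'}$ must send each homogeneous line $\R u_x$ to a homogeneous line $\R u'_{L(x)}$, and compatibility with the product forces $L\in\mathrm{GL}_3(\Z_2)$ to be a linear automorphism of $\Z_2^3$; comparing $\psi(u_x^2)$ with $\psi(u_x)^2$ then gives $\alpha_{p',q'}\circ L=\alpha_{p,q}$. Conversely, any such $L$ produces a twisted group algebra with generating function $\alpha_{p,q}$ which, by the uniqueness in Proposition \ref{uniqA}, is graded-isomorphic to $\bbO_{p,q}$. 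The problem thus reduces to deciding for which $(p,q),(p',q')$ there exists $L\in\mathrm{GL}_3(\Z_2)$ with $\alpha_{p',q'}\circ L=\alpha_{p,q}$.

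For $n=3$, formula \eqref{alphastand} gives $\alpha_3(x)=0$ iff $x=0$, since $|x|\leq 3$ forces the only Hamming weight divisible by $4$ to be $0$. In particular $\alpha_3$ is constant equal to $1$ on the seven nonzero elements of $\Z_2^3$, so $\alpha_3\circ L=\alpha_3$ for every $L\in\mathrm{GL}_3(\Z_2)$. Writing $\alpha_{p,q}=\alpha_3+\ell_{p,q}$ with $\ell_{p,q}(x)=x_1+\cdots+x_p$, the condition $\alpha_{p',q'}\circ L=\alpha_{p,q}$ collapses to the purely linear condition $\ell_{p',q'}\circ L=\ell_{p,q}$.

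Now $\mathrm{GL}_3(\Z_2)$ acts transitively on the seven nonzero elements of $(\Z_2^3)^*$, so the three nonzero forms $\ell_{1,2},\ell_{2,1},\ell_{3,0}$ lie in a single orbit, producing the chain of graded isomorphisms $\bbO_{3,0}\simeq\bbO_{2,1}\simeq\bbO_{1,2}$. The zero form $\ell_{0,3}=0$ lies in a separate orbit, so no graded isomorphism $\bbO_{0,3}\simeq\bbO_{p,q}$ with $p\geq 1$ can exist. To upgrade this to an honest non-isomorphism of algebras I would invoke the classical fact that $\bbO_{0,3}=\bbO$ is a division algebra, while $\bbO_{p,q}$ with $p\geq 1$ has a generator $u_1$ with $u_1^2=1$ and hence the zero divisor $(1-u_1)(1+u_1)=0$.

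The main obstacle is really just the setup: carefully verifying that graded isomorphisms of the algebras $\bbO_{p,q}$ are classified by the $\mathrm{GL}_n(\Z_2)$-orbits of their generating cubic forms. Once this equivalence is established via Proposition \ref{uniqA}, the degeneracy of $\alpha_3$ on $\Z_2^3$ (constant on all nonzero elements) reduces everything to the obvious transitive action on nonzero linear forms.
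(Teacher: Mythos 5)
Your proposal is correct, but it is organized differently from the paper's proof, so let me compare. The paper treats the two directions by separate, concrete devices: it exhibits an explicit coordinate change $x'_1=x_1$, $x'_2=x_1+x_3$, $x'_3=x_1+x_2+x_3$ carrying $\alpha_{2,1}$ to $\alpha_{3,0}$ (and gets $\bbO_{2,1}\simeq\bbO_{1,2}$ either by iterating or by a change of generators in the Clifford subalgebra $\mathrm{Cl}_{2,0}\star u_3$), and it rules out $\bbO_{0,3}$ by counting homogeneous basis elements squaring to $-1$ (seven versus three), i.e.\ by the ``statistics'' invariant formalized later in Lemma \ref{EtOui!}. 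You instead prove a general criterion -- graded isomorphism of $\bbO_{p,q}$ and $\bbO_{p',q'}$ is equivalent to the generating cubic forms lying in the same $\mathrm{GL}_n(\Z_2)$-orbit -- with the forward direction by the line-by-line analysis of a graded isomorphism (where the key point over $\R$ is that $\lambda_x^2>0$, so the signs of squares are forced to match; this is in fact a strengthening of Lemma \ref{EtOui!}) and the converse via the uniqueness in Proposition \ref{uniqA} (whose proof, being a PBW-basis construction, does yield a \emph{graded} isomorphism, a point worth stating explicitly since the proposition only says ``isomorphism''). At $n=3$ your reduction is clean because $\alpha_3$ is constant on nonzero vectors, hence $\mathrm{GL}_3(\Z_2)$-invariant, and everything collapses to the transitive action on nonzero linear forms; this buys uniformity, avoids writing any explicit transformation, and handles the positive and negative statements by one mechanism, whereas the paper's explicit change of variables and counting argument are more elementary and, importantly, the statistics invariant scales to larger $n$, where computing $\mathrm{GL}_n(\Z_2)$-orbits of cubic forms would not be practical. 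Your final zero-divisor versus division-algebra observation even upgrades $\bbO_{0,3}\not\simeq\bbO_{p,q}$ ($p\geq1$) to non-isomorphism of ungraded algebras, which the paper obtains only through the remark identifying these algebras with the octonions and split octonions.
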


\begin{proof}
To establish the isomorphism $\bbO_{3,0} \simeq \bbO_{2,1}$,
we consider the following coordinate transformation:
\[\left\{\begin{array}{ccl}
x'_1&=&x _1,  \\
x'_2&=&x_1 + x_3 , \\
x'_3&=&x_1 + x_2 + x_3.
\end{array}\right. 
\]
It is easy to check that $\alpha_{3,0}(x')=\alpha_{2,1}(x)$.

To establish the isomorphism $\bbO_{2,1} \simeq \bbO_{1,2}$, 
we can apply the above coordinate transformation twice.
We can also use the Clifford subalgebras. 
Write $\bbO_{2,1}\simeq \mathrm{Cl}_{2,0}\star u_3$ using
$$
v_1=u_{e_1+e_3}, \quad v_2=u_{e_2+e_3},
$$
as generators of $ \mathrm{Cl}_{2,0}$. 
Change these generators according to
$$
v'_1=v_1\cdot v_2, \quad v'_2=v_2.
$$
This gives two generators of $\mathrm{Cl}_{1,1}\simeq\mathrm{Cl}_{2,0}$ inside $\bbO_{2,1}$ 
that still anticommute and antiassociate with $u_3$.
Hence, $\bbO_{2,1}\simeq \mathrm{Cl}_{1,1}\star u_3\simeq \bbO_{1,2}$.

Let us prove that $\bbO_{0,3}$ is not (graded-)isomorphic to the other algebras. 
In the algebra $\bbO_{0,3}$ all seven homogeneous basis elements
different from the unit square to $-1$, 
whereas in $\bbO_{3,0},\bbO_{2,1}$ and $\bbO_{1,2}$ 
three elements square to $-1$ and four square to 1. 
Hence, there is no graded-isomorphism over $\R$. 
\end{proof}

\begin{rem}
Let us also mention that
$\bbO_{0,3}$ is isomorphic to the classical algebra of octonions,
whereas $\bbO_{3,0} \simeq \bbO_{2,1} \simeq  \bbO_{1,2}$ 
are isomorphic to the classical algebra of split octonions, see~\cite{MGO2011}.
\end{rem}

\begin{lem}
\label{ex2}
For $n=4,$ one has: 
\[\ \bbO_{4,0} \simeq \bbO_{2,2} \hspace{1cm} \mbox{and}\hspace{1cm}  \bbO_{3,1} \simeq \bbO_{1,3} .\]
\end{lem}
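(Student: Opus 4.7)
By Proposition~\ref{uniqA}, to prove each of the two asserted graded isomorphisms it suffices to exhibit some $\sigma\in GL_4(\bbZ_2)$ such that $\alpha_{p,q}(\sigma(x))=\alpha_{p',q'}(x)$ for all $x\in\bbZ_2^4$; relabeling the canonical basis of $\bbO_{p,q}$ via $\sigma$ then produces a twisted group algebra with generating cubic form $\alpha_{p',q'}$, which is therefore isomorphic to $\bbO_{p',q'}$. Writing $\alpha_{p,q}(x)=\alpha_4(x)+\sum_{i\leq p}x_i$, the required identity becomes
\[
\alpha_4(\sigma(x))-\alpha_4(x)\;=\;\sum_{i\leq p'} x_i\;-\;\sum_{i\leq p}\sigma(x)_i,
\]
whose right-hand side is linear in $x$.

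The crucial observation is that, by formula \eqref{alphastand}, for $n=4$ the cubic form $\alpha_4$ vanishes exactly on the two-element set $\{0,(1,1,1,1)\}$. Hence, if the invertible map $\sigma$ fixes $(1,1,1,1)$---equivalently, if every row of the matrix of $\sigma$ has odd Hamming weight---then $\alpha_4\circ\sigma=\alpha_4$ identically. The required identity then collapses to the purely linear condition that the sum of the first $p$ rows of $\sigma$ equals the characteristic vector of $\{1,\dots,p'\}$ in $\bbZ_2^4$.

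With this reduction the two matrices are essentially forced and can be written down by inspection. For $\bbO_{3,1}\simeq\bbO_{1,3}$ (so $p=3,\,p'=1$), take the upper-triangular matrix with first row $(1,1,1,0)$ and rows $2,3,4$ equal to $e_2,e_3,e_4$: every row has odd weight, the sum of the first three rows is $(1,0,0,0)$, and the matrix is clearly invertible. For $\bbO_{4,0}\simeq\bbO_{2,2}$ (so $p=4,\,p'=2$), take the upper-triangular matrix with first row $e_1$, second row $(0,1,1,1)$, and rows $3,4$ equal to $e_3,e_4$: again every row has odd weight (weights $1,3,1,1$), the sum of all four rows is $(1,1,0,0)$, and the matrix is invertible. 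A short direct check using the definition of $\alpha_{p,q}$ then confirms $\alpha_{p,q}\circ\sigma=\alpha_{p',q'}$ in both cases. The only substantive point is the observation about the level sets of $\alpha_4$ on $\bbZ_2^4$; the rest is a mechanical verification. Notably, this degeneracy fails sharply for $n\geq 5$, where the zero set of $\alpha_n$ becomes much larger, which is ultimately what underlies the exceptional behaviour asserted in Theorem~\ref{thmIso}(iv).
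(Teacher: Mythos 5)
Your proof is correct, but it takes a different route from the paper's. The paper proves this lemma by its ``second method'': it exhibits a new system of generators inside $\bbO_{4,0}$ (resp.\ $\bbO_{1,3}$), namely $u'_1=u_{e_1+e_4}$, $u'_2=u_{e_2+e_4}$, $u'_3=u_3$, $u'_4=u_4$ (resp.\ the four elements of weight $3$), and then checks that they anticommute, antiassociate, and have signature $(2,2)$ (resp.\ $(3,1)$). You instead use the ``first method'' --- a linear change of coordinates in the generating cubic form, the one the paper itself uses for the $n=3$ case in Lemma~\ref{ex1} --- and you add a genuinely clarifying observation: by \eqref{alphastand} the zero set of $\alpha_4$ on $\bbZ_2^4$ is exactly $\{0,(1,1,1,1)\}$, so any $\sigma\in GL_4(\bbZ_2)$ fixing $(1,1,1,1)$ satisfies $\alpha_4\circ\sigma=\alpha_4$, and the whole problem collapses to the linear condition that the first $p$ rows of $\sigma$ sum to the characteristic vector of $\{1,\dots,p'\}$. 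Your two matrices do satisfy all the stated conditions (I checked: row weights are odd, the row sums give $(1,0,0,0)$ and $(1,1,0,0)$ respectively, and both matrices are unitriangular, hence invertible), and the reduction via Proposition~\ref{uniqA} is legitimate since $\sigma$ is additive, so conditions (ii) and (iii) of Definition~\ref{defalpha} pull back along $\sigma$ together with condition (i). What your approach buys is that no case-by-case verification of $\beta$ and $\phi$ on the new generators is needed; what it loses is that the level-set trick is special to $n=4$ (for $n\geq 5$ the zero set of $\alpha_n$ is larger and not preserved by such simple maps), whereas the paper's generator-based computations are the ones that generalize to the higher-dimensional lemmas that follow.
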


\begin{proof}
Consider the original basis $\{u_x\}_{x\in \Z_2^4}$ of $\bbO_{4,0}$, and as usual we simply denote by $u_i=u_{e_i}$. The following set of elements
$$
\left\{\begin{array}{ccl}
u'_1&=&u_{e_1 + e_4}  , \\
u'_2&=&u_{e_2 + e_4} ,  \\
u'_3&=&u_3  ,  \\
u'_4&=&u_4, 
\end{array}\right.
$$
form a system of generators, that anticommute and antiassociate. The signature of this set is $(2,2)$.
Hence, $\bbO_{4,0}$ is isomorphic to
$\bbO_{2,2}$. 

Similarly, consider the following change of generators
$$
\left\{\begin{array}{ccl}
u'_1&=&u_{e_2 + e_3 + e_4}  , \\
u'_2&=&u_{e_1 + e_3 + e_4}  ,\\
u'_3&=&u_{e_1 + e_2 + e_4}   , \\
u'_4&=&u_{e_1 + e_2 + e_3}.
\end{array}\right.$$
The new generators anticommute and antiassociate and have signature $(3,1)$ if the initial ones had signature $(1,3)$ and vice versa. Hence the isomorphism $ \bbO_{3,1} \simeq \bbO_{1,3}$.
\end{proof}

Note that in order to determine the (anti)commutativity or the (anti)associativity between elements
one can use the formulas (ii) and (iii) of Definition \ref{defalpha} and evaluate them using the standard form $\alpha_n$  \eqref{alphastand} (since linear terms vanish in the formulas (ii) and (iii)) which depends only on the Hamming weight of the elements.

 We now extend our method to higher dimensional algebras.

\begin{lem} 
If $p+q  = 4k$  and $p,q $ are even, then
\[
\begin{array}{cccclc}
\bbO_{p,q} \simeq\bbO_{p,q-1} \oplus  \bbO_{p-1,q} , &&& \quad \mbox{ if} &p \geq 2 \mbox{ and }   q \geq 2,   \\
\end{array}
\]
and
\[\ \bbO_{4k,0} \simeq \bbO_{4k-1,0} \oplus  \bbO_{4k-1,0} \hspace{0.2cm}, \hspace{1cm} \bbO_{0,4k} \simeq \bbO_{0,4k-1} \oplus  \bbO_{0,4k-1}    . \]
\end{lem}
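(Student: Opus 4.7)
The strategy is to produce central orthogonal idempotents in $\bbO_{p,q}$ from the top basis element, and then identify each direct summand by exhibiting a generating subset of the appropriate signature.

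Let $\omega:=u_{(1,\ldots,1)}$ be the homogeneous basis element of maximal degree. First I would establish three facts, each a direct consequence of the weight description \eqref{alphastand} of $\alpha_n$ together with $n=4k$ and the identity $|(1,\ldots,1)+y|=n-|y|$: (a) $\alpha_n(1,\ldots,1)=0$, which, combined with the linear correction $\sum_{i\leq p}x_i$ in $\alpha_{p,q}$, yields $\omega^2=(-1)^p=1$ since $p$ is even; (b) $\beta_n((1,\ldots,1),y)=0$ for every $y\in\Z_2^n$, so $\omega$ commutes with every homogeneous element; (c) $\phi_n((1,\ldots,1),y,z)=0$ for all $y,z\in\Z_2^n$, so $\omega$ associates (not only graded-associates) with every pair of homogeneous elements. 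By linearity $\omega$ then lies in the associative nucleus of $\bbO_{p,q}$.

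Once $\omega$ is known to be central and nuclear with $\omega^2=1$, the elements $e_\pm:=\tfrac12(1\pm\omega)$ are genuine orthogonal central idempotents summing to $1$, so the algebra splits as
\[
\bbO_{p,q}\;\simeq\;e_+\bbO_{p,q}\,\oplus\,e_-\bbO_{p,q},
\]
each summand of dimension $2^{n-1}$.

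To identify each summand, I fix a sign, write $E:=e_{\pm}$, and consider $u_i':=u_iE$. By nuclearity of $E$, the $u_i'$ inherit all the relevant relations: $(u_i')^2=u_i^2E=\pm E$ with the same signs as in $\bbO_{p,q}$, they anticommute pairwise, and any three distinct ones antiassociate. Moreover, $\omega E=\pm E$ translates (for any fixed parenthesization) into a relation $u_1'\cdots u_n'=\pm E$, so one of the $u_i'$ is expressible as a product of the others. Dropping $u_n'$, the remaining $u_1',\ldots,u_{n-1}'$ satisfy all the defining relations of $\bbO_{p,q-1}$ from Definition~\ref{definitionBis}; indeed, the trilinear form $\phi$ used in \eqref{MonRel} for $\Z_2^{n-1}$ is precisely the restriction of its $\Z_2^n$-counterpart to degrees having a zero last coordinate. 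The universal property then supplies an algebra map $\bbO_{p,q-1}\to E\bbO_{p,q}$, surjective because $u_n'$ is recovered from the $\omega$-relation, and hence an isomorphism since both algebras have dimension $2^{n-1}$. Dropping $u_1'$ instead (signature $(p-1,q)$) gives $E\bbO_{p,q}\simeq\bbO_{p-1,q}$ by the same argument. Applying this to $E=e_+$ and $E=e_-$ and presenting one summand as $\bbO_{p,q-1}$ and the other as $\bbO_{p-1,q}$ yields the claim for $p,q\geq2$. For the boundary cases $(p,q)=(4k,0)$ and $(p,q)=(0,4k)$ there is no choice of which generator to drop, and both summands equal $\bbO_{4k-1,0}$ or $\bbO_{0,4k-1}$ respectively.

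The main obstacle is the very first step: since $\bbO_{p,q}$ is nonassociative, the idempotents $e_\pm$ will only split the algebra correctly if $\omega$ associates genuinely (not merely up to the sign $(-1)^{\phi_n}$) with every element. This is precisely the content of $\phi_n((1,\ldots,1),\cdot,\cdot)=0$, which relies on the congruence $n\equiv0\pmod4$ through \eqref{alphastand}; once this nuclearity is established, the rest is a formal application of the universal property of the algebras $\bbO_{p',q'}$.
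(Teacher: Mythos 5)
Your argument is correct. Note that the paper itself gives no proof of this lemma --- it only cites the complex-case statement in~\cite{MGO2011} (Theorem 3) and asserts that the real case is identical --- so your write-up actually supplies the self-contained argument the paper omits, and it is the standard one: the ``volume element'' $\omega=u_{(1,\ldots,1)}$ satisfies $\omega^2=1$ exactly because $p$ is even and $n\equiv 0 \bmod 4$, and the weight formula \eqref{alphastand} together with $|{(1,\ldots,1)}+y|=n-|y|$ gives $\alpha_n((1,\ldots,1)+v)=\alpha_n(v)$, from which $\beta((1,\ldots,1),y)=0$ and $\phi((1,\ldots,1),y,z)=0$ follow immediately from Definition~\ref{defalpha}(ii)--(iii). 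Two small points deserve emphasis. First, for $e_\pm\bbO_{p,q}$ to be two-sided ideals with componentwise multiplication you need $\omega$ to associate in \emph{all three} positions; this is automatic here because $\phi$ is symmetric, but it is the step where nonassociativity could bite, and you rightly flag it. Second, the identification of the summands can be closed off more cleanly than by appeal to the informal ``uniqueness'' in Definition~\ref{definitionBis}: the elements $e_\pm u_x$ with $x_n=0$ form a basis of $e_\pm\bbO_{p,q}$ with $e_\pm u_x\cdot e_\pm u_y=(-1)^{f_{\bbO_{p,q}}(x,y)}e_\pm u_{x+y}$, so each summand is literally a twisted group algebra over $\Z_2^{n-1}$ whose generating cubic form is the restriction of $\alpha_{p,q}$ to $x_n=0$ (resp.\ $x_1=0$), namely $\alpha_{p,q-1}$ (resp.\ $\alpha_{p-1,q}$), and Proposition~\ref{uniqA} finishes the identification; your dimension count plus invertibility of the $u_i'$ achieves the same thing. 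The boundary cases $q=0$ and $p=0$ are handled exactly as you say.
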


\noindent
This statement is proved in~\cite{MGO2011} in the complex 
case (see Theorem 3, p. 100).
The proof in the real case is identically the same.  

The next four lemmas give the list of isomorphisms with respect to the congruence class $p+q$ modulo $4$.

\begin{lem} 
\label{package4}
(i)
If  $p+q=4k$ and $p,q$  are even, then
\[
\begin{array}{cccclc}
\bbO_{p,q} \simeq \bbO_{p+4,q-4}, &&& \quad \mbox{ if} &p \geq 2 \mbox{ and }   q \geq 6.   \\
\end{array}
\]

(ii)
If   $p+q=4k$ and $p,q$ are odd, then
\[
\begin{array}{cccclc}
\bbO_{p,q} \simeq \bbO_{p+2,q-2}, &&& \quad \mbox{ if} &p \geq 1 \mbox{ and }   q \geq 3 .  \\
\end{array}
\]
\end{lem}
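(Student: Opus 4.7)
The plan is to apply Proposition~\ref{uniqA}: to establish $\bbO_{p,q}\simeq\bbO_{p',q'}$ as graded algebras, it is enough to find an invertible $\bbZ_2$-linear map $L$ of $\bbZ_2^n$, possibly composed with a coordinate permutation $\sigma$, satisfying $\alpha_{p,q}\circ L\circ\sigma=\alpha_{p',q'}$. Writing $\alpha_{p,q}(x)=\alpha_n(x)+\sum_{i=1}^{p}x_i$ and recalling that $\alpha_n$ is fully symmetric (depending only on the Hamming weight by~\eqref{alphastand}), the task splits into two subgoals: (a) construct $L$ preserving $\alpha_n$, and (b) arrange that $L$ transforms the linear form $\sum_{i=1}^p x_i$ into the indicator of a subset $J\subseteq\{1,\dots,n\}$ of size $p+4$ in case~(i), respectively $p+2$ in case~(ii). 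The permutation $\sigma$ then moves $J$ to the standard position $\{1,\dots,|J|\}$.

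As a building block I use the \emph{block-complement} $L_B$ associated to a $4$-element subset $B\subseteq\{1,\dots,n\}$: set $L_B(e_i)=\sum_{j\in B\setminus\{i\}}e_j$ for $i\in B$, and $L_B(e_j)=e_j$ for $j\notin B$. This is an involution, and two block-complements with disjoint supports commute. A direct computation from~\eqref{alphastand} shows that $L_B$ alters $\alpha_n$ by the quadratic polynomial $\sum_{i\in B,\,j\notin B}x_ix_j$, so a single block-complement cannot preserve $\alpha_n$ unless $B=\{1,\dots,n\}$. However, since $n=4k$, one can partition $\{1,\dots,n\}$ into $k$ blocks of four, $B_1,\dots,B_k$, and the commuting composite $L:=L_{B_1}\cdots L_{B_k}$ turns out to preserve $\alpha_n$.

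Verifying $\alpha_n\circ L=\alpha_n$ is, to my mind, the main obstacle and the technical heart of the argument. Writing $L(x)=x+\sum_\ell w_{B_\ell}\mathbf{1}_{B_\ell}$ with $w_{B_\ell}=\sum_{j\in B_\ell}x_j$, and tracking the Hamming weight modulo~$4$, the weight of $L(x)$ differs from $|x|$ by an integer congruent to $2s\pmod 4$, where $s=\#\{\ell:w_{B_\ell}\equiv 1\pmod 2\}$. The weight formula~\eqref{alphastand} then identifies the change in $\alpha_n$ with the product $[s\ \mathrm{odd}]\cdot[|x|\ \mathrm{even}]$ in $\bbZ_2$. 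Because the $B_\ell$ partition $\{1,\dots,n\}$, one has $\sum_\ell w_{B_\ell}=|x|$ as integers, hence $[s\ \mathrm{odd}]=[|x|\ \mathrm{odd}]$, and the product vanishes identically. Equivalently, the quadratic corrections produced by the individual $L_{B_\ell}$ cancel across the partition.

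It remains to engineer the linear piece. Each $L_{B_\ell}$ contributes to $\sum_{i=1}^p x_i$ the term $[|B_\ell\cap\{1,\dots,p\}|\ \mathrm{odd}]\cdot w_{B_\ell}$, so after composition the new linear form is $\sum_{i\in J}x_i$ with $J=\{1,\dots,p\}\triangle\bigcup_{\ell\in S}B_\ell$ and $S=\{\ell:|B_\ell\cap\{1,\dots,p\}|\ \mathrm{odd}\}$. An elementary count gives $|J|=p+2(s_1-s_3)$, where $s_r:=\#\{\ell:|B_\ell\cap\{1,\dots,p\}|=r\}$. For part~(i) I choose a partition with $s_1=2$ and $s_3=0$: two blocks each consuming one positive and three negative indices (available because $p\geq 2$ and $q\geq 6$), with the remaining blocks having even intersection with $\{1,\dots,p\}$; this yields $|J|=p+4$. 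For part~(ii) I take $s_1=1$, $s_3=0$, realizable under $p\geq 1$ and $q\geq 3$, yielding $|J|=p+2$. The final coordinate permutation $\sigma$ sending $J$ to $\{1,\dots,|J|\}$ preserves $\alpha_n$ by symmetry and completes the desired identifications of the generating cubic forms.
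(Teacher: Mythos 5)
Your proof is correct and constructs exactly the same isomorphism as the paper: the block-complement map on a partition of the coordinates into $4$-element blocks, which the paper writes as the change of generators $u'_{4i+1}=u_{e_{4i+2}+e_{4i+3}+e_{4i+4}}$, etc., with the signature adjusted by converting $(1,3)$-blocks into $(3,1)$-blocks (two of them for part (i), one for part (ii)). The only difference is presentational — you verify invariance of $\alpha_n$ and track the linear term of the generating cubic form, where the paper checks that the new generators still anticommute and antiassociate and counts signatures blockwise — and both verifications reduce to the same Hamming-weight computations modulo $4$.
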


\begin{proof}
We define a new set of generators splitted in blocks of four elements
\[\left\{\begin{array}{ccc}
u'_{4i+1}&=&u_{e_{4i+2} +e_{4i+3} +e_{4i+4}} , \\
u'_{4i+2}&=&u_{e_{4i+3} + e_{4i+4} +e_{4i+1} } , \\
u'_{4i+3}&=&u_{e_{4i+4} + e_{4i+1} +e _{4i+2}}  , \\
u'_{4i+4}&=&u_{e_{4i+1} + e_{4i+2} + e_{4i+3} } . 
\end{array}\right.\]
for every $i \in \{ 0, \ldots, k-1\} $.
This new generators are  still anticommuting and antiassociating.
The signature in each block remains unchanged if the initial one was $(4,0)$, $(2,2)$ or $(0,4)$ and changes from $(3,1)$ to $(1,3)$ and vice versa. 
Therefore one can organize the generators in the blocks in order to obtain the desired isomorphisms.
\end{proof}

\begin{lem} \label{shift268}
If $p+q=4k+1$, then
\[
\begin{array}{cccclcl}
\bbO_{p,q} \simeq \bbO_{p-4,q+4}, &&& \quad \mbox{ if} &p-4 \geq 1  \mbox{  and  }    q \geq 1, \\[4pt]
\bbO_{p,q} \simeq \bbO_{p+1,q-1} ,&&& \quad \mbox{ if}   & p \geq 1 \mbox{  and   }    q-1 \geq 1, 
& p \text{ even}.
\end{array}
\]
\end{lem}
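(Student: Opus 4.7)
Both parts proceed by exhibiting an explicit basis $x^{(1)},\ldots,x^{(n)}\in\Z_2^n$ such that the new generators $u'_i:=u_{x^{(i)}}$ satisfy the pairwise $\beta(x^{(i)},x^{(j)})=1$ (anticommute) and triple $\phi(x^{(i)},x^{(j)},x^{(k)})=1$ (antiassociate) relations defining the target algebra. Because $\phi$ is the symmetric trilinear form on $\Z_2^n$ with $\phi(e_a,e_b,e_c)=1$ iff $a,b,c$ are distinct (so it does not depend on $(p,q)$), and $\alpha_n$ depends only on Hamming weight modulo~$4$, all verifications reduce to combinatorial counts on the supports of the $x^{(i)}$.

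For part~(i), my approach is a \emph{block-of-six complementation}. Since $p\geq 5$ and $q\geq 1$, after a permutation of indices I may assume $u_1,\ldots,u_5$ have square $+1$ and $u_6$ has square $-1$. I then define
\[
 u'_i := u_{e_1+e_2+\cdots+e_6+e_i}\quad(i=1,\ldots,6),\qquad u'_j:=u_j\quad(j\geq 7).
\]
Each $u'_i$ has a weight-$5$ degree, and evaluating $\alpha_{p,q}$ on such elements gives $(u'_i)^2=-1$ for $i\leq 5$ and $(u'_6)^2=+1$, so the block signature flips from $(5,1)$ to $(1,5)$ and the global shift is $(-4,+4)$. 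Anticommutativity follows because the common summand $e_1+\cdots+e_6$ cancels in $\beta$. The central antiassociativity verification for triples \emph{inside} the block produces the count $3(6-2)+1 \equiv 1\pmod 2$, and mixed triples (with one or two of the three indices outside the block) yield analogous counts also equal to~$1$ modulo~$2$.

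For part~(ii), the shift $(+1,-1)$ is odd, whereas every block-of-even complementation shifts by an even amount; hence the construction must combine two ingredients. The crucial formula is
\[
 \beta\bigl(e_1+\cdots+e_n,\,y\bigr)\ \equiv\ \alpha_n(n-|y|)+\alpha_n(n)+\alpha_n(|y|)\pmod 2,
\]
which for $n=4k+1$ shows that the \emph{top element} $u_T:=u_{e_1+\cdots+e_n}$ commutes with every degree-$1$ element but anticommutes with every degree-$3$ element. My construction: partition the $n$ generators into $k$ blocks of~$4$ plus one leftover; choose the leftover to be positive and arrange one block of signature $(1,3)$ and the remaining $k-1$ blocks with signatures in $\{(4,0),(2,2),(0,4)\}$, a straightforward counting exercise when $p$ is even and $q\geq 3$. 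Apply block-of-four complementation to \emph{every} block; this promotes all non-leftover degrees to weight~$3$, and the $(1,3)$ block flips to $(3,1)$ contributing shift $(+2,-2)$. Finally replace the leftover positive generator by $u_T$: since $\alpha_{p,q}(T)=\alpha_n(n)+p=1+p$ and $p$ is even, $u_T$ has square $-1$, giving the further shift $(-1,+1)$ for a total of $(+1,-1)$.

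The main subtlety is antiassociativity verification for triples of mixed origin. In part~(i) the $\phi$-counts for the mixed triples are the nontrivial step. In part~(ii) it is essential that block-of-four is applied to \emph{every} block so that all non-leftover generators have degree~$3$ --- otherwise $u_T$ would commute with any surviving degree-$1$ generator. The parity hypothesis ``$p$ even'' enters precisely in the final step: it ensures $(u_T)^2=-1$ so that the leftover positive generator is indeed flipped, and it also governs the parity in the counting argument for the intermediate blocks.
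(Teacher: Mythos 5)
Your proposal is correct, and part (ii) is essentially the paper's own proof: complement each of the $k$ blocks of four (so every block keeps its signature except $(3,1)\leftrightarrow(1,3)$) and replace the remaining generator by the top element $u_z$, $z=(1,\ldots,1)$, whose square is $-1$ exactly when $p$ is even since $\alpha_{p,q}(z)=\alpha_n(z)+p=1+p$. Your part (i), however, takes a genuinely different route. The paper reuses the same blocks-of-four-plus-top-element construction for the $(-4,+4)$ shift, which requires arranging two blocks of signature $(3,1)$ to flip while the last slot is preserved; read literally this fails in a single step for signatures such as $(8,1)$ or $(5,4)$ (not enough generators of one sign to form the two flipping blocks), and one must compose with the $(+1,-1)$ isomorphism to finish. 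Your block-of-six complementation $u'_i=u_{e_1+\cdots+e_6+e_i}$ sidesteps this: a sub-block of signature $(5,1)$ always exists under the hypotheses $p\geq 5$, $q\geq 1$, the new degrees have weight $5$, pairwise sums have weight $2$ or $6$ and triple sums weight $3$, $5$, $7$ or $9$, so every $\beta$ and $\phi$ value equals $1$ by \eqref{alphastand}, and the block signature flips from $(5,1)$ to $(1,5)$ uniformly. So your version of (i) is self-contained where the paper's needs a supplementary composition. Two small points to tighten: the count ``$3(6-2)+1$'' for inside-block triples is opaque --- the seven terms of $\phi$ contribute $\alpha_n(3)+3\,\alpha_n(2)+3\,\alpha_n(5)=7\equiv 1$ --- and you should note that the new degrees span $\Z_2^n$ (the sum of five of the six $x^{(i)}$ recovers the missing $e_j$), although the paper glosses over this as well.
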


\begin{proof}
Consider the same change of the first $4k$ generators as 
in Lemma \ref{package4}, and change also the last generator as follows
\[\left\{\begin{array}{ccc}
u'_{4i+1}&=&u_{e_{4i+2} +e_{4i+3} +e_{4i+4}} , \\
u'_{4i+2}&=&u_{e_{4i+3} + e_{4i+4} +e_{4i+1} }, \\
u'_{4i+3}&=&u_{e_{4i+4} + e_{4i+1} +e _{4i+2}} ,\\
u'_{4i+4}&=&u_{e_{4i+1} + e_{4i+2} + e_{4i+3} },
\end{array}\right.
\quad \mbox{and} \quad u'_{n}=u_{z},
\]
where $i \in \{ 0, \ldots, k-1\} $,
and $z$ denotes the element of maximal weight:
$$
z= (1, \ldots, 1) \in \mathbb{Z}_2^{n}.
$$
One then checks directly that these changes give the desired isomorphisms. 
\end{proof}

\begin{lem}
(i)
If $p+q=4k+2$ and $p,q$ are odd, then 
\[
\begin{array}{cccclc}
\bbO_{p,q} \simeq \bbO_{p+4,q-4}, &&& \quad \mbox{ if} & p \geq 1 \mbox{ and }   q \geq 5.  \\
\end{array}
\]

(ii)
If $p+q=4k+2$ and $p,q$ are even, then
\[
\begin{array}{cccclc}
\bbO_{p,q} \simeq \bbO_{p-2,q+2}, &&& \quad \mbox{ if} &p \geq4  \mbox{ and }   q \geq 2 . \\
\end{array}
\]
\end{lem}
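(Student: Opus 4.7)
The plan is, in the spirit of Lemmas~\ref{package4} and~\ref{shift268}, to exhibit an explicit change of basis of $\bbZ_2^n$ which produces a new system of $n$ pairwise anticommuting and anti-associating generators carrying the required signature. Throughout, both the commutativity defect $\beta$ and the associativity defect $\phi$ are governed by the standard form $\alpha_n$ alone (the linear part $\sum_{i\leq p}x_i$ of $\alpha_{p,q}$ cancels in both formulas), and $\alpha_n(x)$ takes the value $1$ unless $|x|\equiv 0\pmod{4}$. Thus every verification reduces to tracking which Hamming weights mod~$4$ appear in the pairwise and triple sums of the proposed new generators.

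For part (i), I would permute the generators so that the first six carry signature $(1,5)$ (possible because $p\geq 1$ and $q\geq 5$), leaving the remaining $4(k-1)$ generators with the even signature $(p-1,q-5)$. Replace the first six generators by the weight-$5$ elements $v_i:=u_{e_1+e_2+e_3+e_4+e_5+e_6-e_i}$ for $i=1,\dots,6$, and keep all other generators unchanged. The vectors $v_1,\dots,v_6$ are linearly independent in $\bbZ_2^{\{1,\dots,6\}}$, so together with $e_7,\dots,e_n$ they form a basis of $\bbZ_2^n$. Every pairwise sum among the new generators has Hamming weight $2$ or $6$, and every triple sum has weight $3$ or $7$; in particular $\alpha_n$ equals $1$ on all of them, so $\beta=\phi=1$ throughout. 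A short $\alpha_{p,q}$-count then shows that five of the $v_i$ square to $+1$ and one to $-1$, so the $6$-block signature flips from $(1,5)$ to $(5,1)$, giving the shift $(+4,-4)$.

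Part (ii) is genuinely more subtle. The naive attempt -- apply the weight-$3$ transformation of Lemma~\ref{package4} to all $k$ blocks of four and retain the two leftover indices as weight-$1$ generators -- fails, because a weight-$3$ generator and a disjoint weight-$1$ generator commute (their sum has weight $4\equiv 0\pmod{4}$). To repair this I would, given $p\geq 4$ and $q\geq 2$, partition the generators into one $4$-block of signature $(3,1)$, $k-1$ further $4$-blocks with even signatures summing to $(p-4,q-2)$, and two leftover indices $\{a,b\}$ of signature $(1,1)$ (with $a$ positive and $b$ negative). Apply the weight-$3$ transformation of Lemma~\ref{package4} inside every $4$-block, and replace the two leftover generators by the weight-$(n-1)$ elements
\[
g_1:=u_{z+e_a},\qquad g_2:=u_{z+e_b},\qquad z:=e_1+\cdots+e_n.
\]

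Linear independence of the new generating set is immediate. The key observation is that $|g_i|=n-1=4k+1\equiv 1\pmod{4}$, and a case-by-case check shows that every weight appearing in the pairwise and triple sums of the new generators lies in the set $\{1,2,3,5,6,9,4k-5,4k-2,4k-1,4k+1\}$, all $\not\equiv 0\pmod{4}$; hence $\beta=\phi=1$ for every mixed pair and triple. The $(3,1)$-block flips to $(1,3)$ by the standard weight-$3$ computation, the even blocks are unchanged, and since $p$ is even one finds that $g_1$ squares to $+1$ and $g_2$ to $-1$, so the leftover still contributes $(1,1)$ to the total signature. The resulting shift is $(-2,+2)$, giving $\bbO_{p,q}\simeq\bbO_{p-2,q+2}$. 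The main obstacle is precisely the choice of $g_1,g_2$: replacing $u_a,u_b$ by $u_{z+e_a},u_{z+e_b}$ boosts their weights by just enough to stay clear of $0\pmod{4}$ in every pairwise and triple sum involving them, while preserving the signature contribution of the leftover.
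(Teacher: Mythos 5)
Your argument is correct: both constructions produce a basis of $\Z_2^n$ consisting of homogeneous elements that pairwise anticommute and triple-wise antiassociate (all the relevant Hamming weights are indeed $\not\equiv 0 \bmod 4$, so $\beta=\phi=1$ throughout), and your signature counts for the weight-$5$ block in (i) and for the $(3,1)$-block plus the two weight-$(n-1)$ generators in (ii) check out. The method — an explicit graded change of generators verified through the weight formula \eqref{alphastand} and then recounting squares — is exactly the paper's, but your explicit bases are quite different and noticeably heavier than what the paper uses. The paper handles both parts with a single uniform twist: $u'_i=u_{e_i+w_2}$ for $1\leq i\leq 4$ and $u'_i=u_i$ otherwise, where $w_2=(0,0,0,0,1,\dots,1)$ has weight $n-4=4k-2$; the new degrees have weight $4k-1\equiv 3$, all pairwise and triple sums again avoid weight $\equiv 0 \bmod 4$, and the signature of the $4$-block $\{1,2,3,4\}$ flips, $(t,4-t)\mapsto(4-t,t)$, precisely when $p+t$ is odd, where $t$ is the number of positive generators placed among the first four. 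Choosing $t=0$ when $p$ is odd gives $(0,4)\mapsto(4,0)$, i.e.\ part (i), and choosing $t=3$ when $p$ is even gives $(3,1)\mapsto(1,3)$, i.e.\ part (ii). Your detour in (ii) through the all-ones vector $z$ correctly circumvents the weight-$4$ obstruction you identified, but the paper's $w_2$-twist avoids the obstruction from the start by never producing disjoint supports of weights $3$ and $1$; if you want a leaner write-up, that single construction subsumes both of your cases.
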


\begin{proof}
Consider the following change of the generators 
\[\left\{\begin{array}{cclc}
u'_i&=&u_{e_i+w_2}, & \mbox{ if} \hspace{0.5cm}  1 \leq i \leq 4  , \\[4pt]
u'_i&=&u_i,   & \mbox{ otherwise} ,
\end{array}\right.\]
where $w_2= (0,0,0,0,1, \ldots, 1)$ is an element in $\mathbb{Z}_2^{n}$. 
\end{proof}

\begin{lem}
If $p+q=4k+3$, then
\[
\begin{array}{cccclcl}
\bbO_{p,q} \simeq \bbO_{p-4,q+4} , &&& \quad \mbox{ if} &  p-4  \geq  1  \mbox{  and  }   q  \geq 1 ,  \\[4pt]
\bbO_{p,q} \simeq \bbO_{p-1,q+1}, &&& \quad \mbox{ if}   &    p  \geq 1   \mbox{  and  }    q  \geq 1, \; p\text{ even}  . 
\end{array}
\]
\end{lem}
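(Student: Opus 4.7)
The plan is to adapt the strategy of Lemma \ref{shift268} (the case $n=4k+1$), which treated the residual generator left over after forming $k$ blocks of four by replacing it with $u_z$, $z=(1,\ldots,1)$. Since here $n=4k+3$, there are three residual indices $\{4k+1,4k+2,4k+3\}$ to handle. My starting point is to apply the block transformation of Lemma \ref{package4} on each of $k$ chosen blocks $B_i=\{4i+1,\ldots,4i+4\}$,
\[
u'_{4i+j} \;=\; u_{e_{4i+1}+e_{4i+2}+e_{4i+3}+e_{4i+4}+e_{4i+j}}, \qquad i=0,\ldots,k-1,\; j=1,\ldots,4,
\]
and to complement this on the residual indices by a substitution of the form $u'_{4k+j}=u_{e_{4k+j}+w}$ for a suitable $w\in\bbZ_2^n$, a natural first candidate being $w=z$.

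The verification then has two parts. First, one checks via formulas (ii) and (iii) of Definition \ref{defalpha} together with the Hamming-weight formula \eqref{alphastand} for $\alpha_n$ that the new generators pairwise anticommute and antiassociate. Since the degrees $\bar{u'_\ell}$ have Hamming weight $3$ (block case) or $n-1$ (residual case, when $w=z$), each $\beta(\bar{u'_\ell},\bar{u'_m})$ and each $\phi(\bar{u'_\ell},\bar{u'_m},\bar{u'_r})$ reduces to arithmetic on weights modulo $4$, and the check is routine. Second, one reads off the signatures from $\alpha_{p,q}(\bar{u'_\ell})$: the block transformation swaps $(3,1)\leftrightarrow(1,3)$ in each block and fixes the other block signatures $(0,4),(2,2),(4,0)$, while the residual transformation either preserves or flips the three residual signatures depending on the parity of the number of positive indices placed in the blocks.

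For part (i), by choosing the partition into blocks and residue so that the correct number of $(3,1)$- and $(1,3)$-blocks occurs and the residual contribution aligns, one realises the prescribed shift $(p',q')=(p-4,q+4)$. For part (ii), a smaller shift $(-1,+1)$ is needed, so the residual element $w$ is adjusted accordingly (for instance by leaving one or two of the three residual generators unchanged, or by replacing $z$ with $z+e_m$ for some residual $m$); the hypothesis $p$ even is exactly what makes the relevant parity constraints on the distribution of generators across blocks and residue solvable. The main obstacle I expect is the combinatorial bookkeeping of the signature changes across all subcases (different parities of $p$, and different ways of distributing the $p$ positive indices across the $k$ blocks and the residue), especially in corner cases such as $q=1$ or $q=2$ where the freedom in the partition is limited and a further tailored modification of $w$ may be required to make the signature arithmetic balance.
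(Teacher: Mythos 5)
Your general method (change the homogeneous generators, verify anticommutation and antiassociation through the Hamming--weight formula \eqref{alphastand}, then count signs) is the right one, but your specific construction fails exactly at the step you call ``routine'', and none of your fallback options repairs it. Take $w=z=(1,\ldots,1)$, so the residual degrees are $r_a=e_{4k+a}+z$ of weight $n-1=4k+2$. These do anticommute with the weight-$3$ block generators and with each other, but the antiassociativity is violated: for the triple of the three residual generators the pairwise sums have weight $2$ while $r_1+r_2+r_3$ has weight $n-3=4k\equiv 0 \pmod 4$, so $\phi(r_1,r_2,r_3)=0+3\cdot 1+3\cdot 1\equiv 0$, i.e.\ they \emph{associate}; similarly, for any two block generators $x,y$ (same block or not) and one residual $r_a$ one gets $|x+y|\equiv 2$, $|x+r_a|\equiv|y+r_a|\equiv 3$ and $|x+y+r_a|\equiv 0\pmod 4$, hence $\phi(x,y,r_a)=0$. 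So the proposed elements do not satisfy the defining relations \eqref{genrel}--\eqref{MonRel} of any $\bbO_{p',q'}$, and the signature bookkeeping you concentrate on never gets started. The suggested adjustments fail as well: leaving a residual generator unchanged gives a weight-$1$ element, which \emph{commutes} with every weight-$3$ block generator ($\beta=\alpha_n(\mathrm{wt}\,4)+1+1=0$ --- this is precisely why Lemma \ref{shift268} had to replace the leftover generator by $u_z$), and taking $w=z+e_m$ with $m$ residual turns one residual generator into $u_z$ itself, of weight $n\equiv 3$, which also commutes with every block generator since $|x+z|=4k\equiv 0$. Thus the crux of passing from $4k+1$ to $4k+3$ --- what to do with the three leftover generators --- is exactly what is missing; in addition, even granting valid relations, realising the shift $(-4,+4)$ purely by flipping $(3,1)$-blocks needs two such blocks, hence six positive indices, while the lemma only assumes $p\geq 5$, so you would in any case have to control the residual signs, which your construction does not do independently.

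For comparison, the paper does not use the blocks-of-four construction in the $4k+3$ case at all. It works with the vectors $w_1=(1,\ldots,1,0)$, $w_2=(0,0,0,0,1,\ldots,1)$, $w_3=(0,0,0,0,1,\ldots,1,0)$, modifying the first four generators by adding $w_3$ to their degrees and replacing the last two generators by the high-weight elements $u_{w_1}$, $u_{w_2}$; moreover, the remark following the proof obtains the isomorphism $\bbO_{p,q}\simeq\bbO_{p-1,q+1}$ (for $p$ even) from the classical isomorphism $\mathrm{Cl}_{p,q-1}\simeq\mathrm{Cl}_{p-1,q}$ of the Clifford subalgebra \eqref{CliffSub}, a route you could also use for your part (ii). If you want to keep the block strategy, you must first exhibit explicit residual degrees passing all $\beta$- and $\phi$-checks against the weight-$3$ block generators and against each other (and spanning $\Z_2^n$ together with them), and only then carry out the signature count; as written, no candidate in your proposal does so.
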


\begin{proof}
The isomorphisms are given by considering the following change of the generators 
\[\left\{\begin{array}{ccll}
u'_i&=&u_{e_i + w_3}, & \mbox{ if} \hspace{0.5cm}  1 \leq i \leq 4 ,  \\
u'_{i}&=&u_i , &  \mbox{ if} \hspace{0.5cm}  1 \leq i \leq  p+q-2 ,\\
u'_{p+q-1}&=&u_{w_1}  ,&   \\
u'_{p+q}&=&u_{w_2} ,  &   
\end{array}\right.\]
where $w_1= (1,\ldots, 1,0)$, $w_2= (0,0,0,0,1, \ldots, 1)$ and $w_3= (0,0,0,0,1, \ldots, 1, 0)$ are elements in $\mathbb{Z}_2^{n}$. 
 \end{proof}

The combination of all the above lemmas implies Theorem \ref{thmIso}, part $(i)$ and part $(ii)$.

\begin{rem}
The isomorphisms 
$
\bbO_{p,q} \simeq \bbO_{p-1,q+1},
$
in the case $n=p+q=4k+3$, $p$ even, can be established 
using connection to Clifford algebras. 
Indeed,
consider the Clifford subalgebra $\mathrm{Cl}_{p,q-1}\subset\bbO_{p,q}$, 
see Section \ref{abstract algebras},
with the generators $v_i$. 
For $p+q = 4k+3$ and $p$ even, the classical isomorphism is
\[ 
\mathrm{Cl}_{p,q-1} \simeq \mathrm{Cl}_{p-1,q}.
\]
This isomorphism can be given by the change of variables on generators:
\[\left\{\begin{array}{cclc}
v'_1&=&v_1 \cdots  v_{n-1} , &\\[4pt]
v'_i&=&v_i ,& \mbox{ if} \hspace{0.5cm}  1 \leq i \leq p+q-1   .
\end{array}\right.\]
Add the generator $u_{n}$ of weight $1$ 
that anticommutes and antiassociates with $v'_i$, 
one obtains the algebra $\bbO_{p-1,q+1}$. 
Hence the result.
\end{rem}

The above isomorphism can be illustrated by the following diagram:   
\begin{center}
\begin{tabular}{ccccccc}
$\mathrm{Cl}_{p,q-1}$ $\subset$ &$  < v_1, \ldots , v_{n-1} >  \hspace{0.1cm}\star \hspace{0.1cm} < u_{n} >  $  && $\simeq$& & $ < v'_1, \ldots , v'_{n-1} >  \hspace{0.1cm}\star  \hspace{0.1cm} < u_{n} > $  &$\supset$ $\mathrm{Cl}_{p-1,q}$  \\
&\reflectbox{\rotatebox[origin=c]{270}{$\simeq$}}  && && \reflectbox{\rotatebox[origin=c]{90}{$\simeq$}} &\\
& $\bbO_{p,q}$  &&&& $\bbO_{p-1,q+1}$ &
\end{tabular}
\end{center}

\subsection{Criterion of non isomorphism}

In order to prove Theorem \ref{thmIso}, Parts $(iii)$ and $(iv)$,
we will define an {\it invariant} of the algebras:
we count how many homogeneous basis elements square to $-1$.
This invariant will be called the {\it statistics}.

\begin{df}
Define the \emph{statistics} of the algebra $\bbO_{p,q}$ by 
\[ 
s(p,q) :=  \# \left\{ x \in \bbZ_2^n : \alpha_{p,q}(x) = 1  \right\} .
\]
\end{df}

Clearly, 
$$
0\leq{}s(p,q)\leq2^n,
$$
where $n=p+q$.  

\begin{lem}
\label{EtOui!}
The number $s(p,q)$ is invariant with respect to an isomorphism 
preserving the structure of $\Z_2^n$-graded algebra.
\end{lem}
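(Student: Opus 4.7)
The plan is to reinterpret the quantity $s(p,q)$ in purely algebraic terms (depending only on the graded algebra structure) and then observe that a graded isomorphism must preserve this count.

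First, I would unpack the meaning of $\alpha_{p,q}$. By construction, $u_x\cdot u_x = (-1)^{f_{\mathbb{O}_{p,q}}(x,x)}u_{2x}= (-1)^{\alpha_{p,q}(x)}\cdot 1$, so $\alpha_{p,q}(x)=1$ if and only if $u_x^2=-1$ (and $\alpha_{p,q}(x)=0$ iff $u_x^2=1$). Consequently,
\[
s(p,q)=\#\{x\in\mathbb{Z}_2^n : u_x^2=-1\},
\]
which gives $s(p,q)$ as the number of homogeneous basis elements of $\mathbb{O}_{p,q}$ that square to $-1$.

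Next, suppose $\varphi:\mathbb{O}_{p,q}\to\mathbb{O}_{p',q'}$ is a graded algebra isomorphism; in particular $p+q=p'+q'=:n$. Since the homogeneous component of degree $x\in\mathbb{Z}_2^n$ is one-dimensional (spanned by $u_x$), preservation of the grading forces
\[
\varphi(u_x)=\lambda_x\, u'_{\psi(x)}
\]
for some nonzero scalars $\lambda_x\in\mathbb{R}^*$ and some map $\psi:\mathbb{Z}_2^n\to\mathbb{Z}_2^n$. Looking at $\varphi(u_x\cdot u_y)$ and using that $u_x\cdot u_y$ is a nonzero scalar multiple of $u_{x+y}$, one checks that $\psi(x+y)=\psi(x)+\psi(y)$; bijectivity of $\varphi$ makes $\psi$ a group automorphism of $\mathbb{Z}_2^n$ (and $\lambda_0=1$, $\psi(0)=0$).

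Finally, applying $\varphi$ to the identity $u_x^2=\pm 1$ and comparing with $\varphi(u_x)^2=\lambda_x^2\,(u'_{\psi(x)})^2$ yields
\[
\lambda_x^2\,(u'_{\psi(x)})^2 = u_x^2.
\]
Because $\lambda_x^2>0$ in $\mathbb{R}$, the signs of $(u'_{\psi(x)})^2$ and $u_x^2$ must coincide, i.e.\ $\alpha_{p',q'}(\psi(x))=\alpha_{p,q}(x)$. Thus $\psi$ restricts to a bijection between $\{x:\alpha_{p,q}(x)=1\}$ and $\{y:\alpha_{p',q'}(y)=1\}$, giving $s(p,q)=s(p',q')$.

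The argument is essentially routine once the first step is made; the only mild subtlety is noting that the one-dimensionality of each homogeneous component forces $\varphi$ to permute the basis elements $u_x$ up to scalars, so that the invariant \textquotedblleft number of $u_x$ with $u_x^2=-1$\textquotedblright{} is manifestly intrinsic. The crucial point that makes the signs rigid is that we are over $\mathbb{R}$, so $\lambda_x^2$ is positive.
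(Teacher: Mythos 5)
Your proof is correct and follows essentially the same route as the paper: identify $s(p,q)$ with the count of homogeneous basis elements squaring to $-1$, note that a graded isomorphism can only rescale each $u_x$ by a nonzero real (since homogeneous components are one-dimensional), and conclude that the sign of $u_x^2$ is preserved because the scalar's square is positive. The extra remarks about the induced map $\psi$ being a group automorphism are harmless but not needed.
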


\begin{proof}
An isomorphism preserving the structure of $\Z_2^n$-graded algebra
 sends  homogeneous basis elements to homogeneous.
Since  homogeneous components of $\bbO_{p,q}$ are of dimension $1$,
this means that the isomorphism multiplies every basis element $u$ by a constant.
Hence it preserves the sign of $u^2$.
\end{proof}

Our goal is to show that
$$
s \left( n,0 \right) < s \left( p,q \right) < s \left( 0, n \right),
$$
for all $p,q\not=0$, where $p+q=n$.

\begin{lem}
Let $n \geq 3$, the algebras $\mathbb{O}_{0,n}$ and $\mathbb{O}_{n,0}$ are not isomorphic.
\end{lem}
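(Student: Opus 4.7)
The plan is to apply Lemma \ref{EtOui!}: since $s(p,q)$ is an invariant under graded isomorphism, it suffices to show $s(0,n)\neq s(n,0)$. I will compute each statistic explicitly using the closed form \eqref{alphastand} for $\alpha_n$, sorting basis elements by their Hamming weight.

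First I would recall that $\alpha_{p,q}(x)=\alpha_n(x)+\sum_{1\leq i\leq p}x_i$, so
$\alpha_{0,n}(x)=\alpha_n(x)$ and $\alpha_{n,0}(x)=\alpha_n(x)+|x|\pmod 2$. Writing $k=|x|$, the formula \eqref{alphastand} together with the parity of $k$ gives a case analysis on $k\bmod 4$: for $k\equiv 0$ both functions vanish; for $k\equiv 2$ both equal $1$; for $k\equiv 1,3$ one has $\alpha_{0,n}(x)=1$ but $\alpha_{n,0}(x)=0$. Summing the binomial coefficients over residue classes yields
\[
s(0,n)=\sum_{k\not\equiv 0\,(4)}\binom{n}{k},\qquad s(n,0)=\sum_{k\equiv 2\,(4)}\binom{n}{k}.
\]

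Then I would subtract to obtain the clean identity
\[
s(0,n)-s(n,0)=\sum_{k\text{ odd}}\binom{n}{k}=2^{n-1},
\]
which is strictly positive for every $n\geq 1$, and in particular for $n\geq 3$. This shows $s(0,n)\neq s(n,0)$ and hence the two algebras cannot be graded-isomorphic.

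There is no real obstacle here: the argument is a direct Hamming-weight bookkeeping enabled by the symmetric form of $\alpha_n$. The only subtle point is making sure one is counting the right thing. The statistics counts how many of the $2^n$ homogeneous basis elements $u_x$ satisfy $u_x^2=-u_0$, which by Definition \ref{defalpha}(i) is exactly the number of $x$ with $\alpha_{p,q}(x)=1$; since a graded isomorphism scales each $u_x$ by a nonzero real and preserves the sign of $u_x^2$, this count is truly an isomorphism invariant. This same computation will be reused in the sequel to compare $s(n,0)$ and $s(0,n)$ with $s(p,q)$ for $pq\neq 0$, so framing it as a weight-by-weight decomposition is convenient for later lemmas.
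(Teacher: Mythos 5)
Your proof is correct and follows essentially the same route as the paper: both invoke the invariance of the statistics $s(p,q)$ under graded isomorphisms and compute $s(n,0)=\sum_{k\equiv 2\,(4)}\binom{n}{k}$ and $s(0,n)=2^n-\sum_{k\equiv 0\,(4)}\binom{n}{k}$ by sorting basis elements according to Hamming weight. Your only (harmless) refinement is to evaluate the difference exactly as $s(0,n)-s(n,0)=\sum_{k\ \mathrm{odd}}\binom{n}{k}=2^{n-1}>0$, where the paper instead observes that the two even-residue sums together fall short of $2^n$.
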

\begin{proof}
We compute the statistics for these algebras:
$$
s \left( n, 0 \right)  =  \sum \limits_{\underset{4i+2 \leq n}{i=0}}^{k} \binom {n} {4i+2}, \qquad
s \left( 0,n \right)  =  2^n - \sum\limits_{\underset{4i \leq n}{i=0}}^{k} \binom {n} {4i}.
$$
Clearly, $s \left( n,0 \right)\not=s \left( 0, n \right) $ since
$$
\sum\limits_{\underset{4i \leq n}{i=0}}^{k} \binom {n} {4i}+
\sum \limits_{\underset{4i+2 \leq n}{i=0}}^{k} \binom {n} {4i+2}
 \quad < \quad
 \sum_{i=0}^{n} \binom {n} {i}=2^n.
$$
\end{proof}

\begin{lem}
Let $n \geq 5$, the algebras $\mathbb{O}_{n,0}$ and $\mathbb{O}_{0,n}$ are not isomorphic to any algebras $\bbO_{p,q}$. Furthermore, we have the different cases. \\
If $n= 4k$, then 
\[  \bbO_{n-1, 1} \not\simeq \bbO_{n-2, 2}   \not\simeq \bbO_{n-4, 4}  .  \]
If $n= 4k+1$, then 
\[ \bbO_{n-1, 1} \not\simeq \bbO_{n-2, 2}  . \]
If $n= 4k+2$, then 
\[ \bbO_{n-1, 1} \not\simeq \bbO_{n-2, 2}    \not\simeq \bbO_{n-3, 3}.  \]
If $n= 4k+3$, then 
\[ \bbO_{n-1, 1}    \not\simeq \bbO_{n-3, 3}.  \]

\end{lem}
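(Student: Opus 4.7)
The plan is to exploit the statistics invariant $s(p,q)$: by Lemma \ref{EtOui!}, two graded-isomorphic algebras in the series share the same value of $s$, so I will derive a closed-form expression for $s(p,q)$ and simply read off the non-isomorphisms by substitution. The crucial observation is that $\alpha_{p,q}(x) = \alpha_n(x) + x_1+\cdots+x_p$ and, by \eqref{alphastand}, $(-1)^{\alpha_n(x)}$ depends only on $|x|$ modulo $4$. Using the standard roots-of-unity identity
\[ (-1)^{\alpha_n(x)} = \tfrac{1}{2}\bigl(-1 + i^{|x|} + (-1)^{|x|} + (-i)^{|x|}\bigr), \]
I would substitute into $S(p,q) := \sum_{x \in \bbZ_2^n} (-1)^{\alpha_{p,q}(x)} = 2^n - 2s(p,q)$.

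Splitting $|x| = a + b$ into the contributions from the first $p$ and last $q$ coordinates factorizes $S(p,q)$ as a linear combination of four products of the form $(1-\omega)^p(1+\omega)^q$ with $\omega \in \{1, i, -1, -i\}$. For $p, q > 0$ the two real-$\omega$ terms vanish and a short computation yields
\[ s(p, q) = 2^{n-1} - 2^{n/2-1}\cos\!\left(\tfrac{\pi(p-q)}{4}\right), \]
while the boundary cases give
\[ s(n, 0) = 2^{n-2} - 2^{n/2-1}\cos(\tfrac{n\pi}{4}), \qquad s(0, n) = 3\cdot 2^{n-2} - 2^{n/2-1}\cos(\tfrac{n\pi}{4}). \]

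With this formula, the four congruence classes of the lemma are handled by straightforward arithmetic. For each residue $r \in \{0, 1, 2, 3\}$ of $n$ modulo $4$, the values of $\cos(\pi(p-q)/4)$ at the listed signatures $(p,q) = (n-j, j)$ with $j \in \{1, 2, 3, 4\}$ take values in $\{0, \pm 1, \pm \tfrac{\sqrt{2}}{2}\}$ (with signs depending on the parity of $k = \lfloor n/4\rfloor$) and are pairwise distinct within each listed triple or pair, forcing distinct statistics. To separate $\bbO_{n, 0}$ and $\bbO_{0, n}$ from every $\bbO_{p,q}$ with $p, q > 0$, note that all ``interior'' statistics lie in $\bigl[2^{n-1} - 2^{n/2-1},\, 2^{n-1} + 2^{n/2-1}\bigr]$, whereas $s(n,0) \leq 2^{n-2} + 2^{n/2-1}$ and $s(0,n) \geq 3\cdot 2^{n-2} - 2^{n/2-1}$; both estimates fall outside that window precisely when $2^{n-2} > 2^{n/2}$, i.e., when $n \geq 5$.

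The main delicate point is the boundary behavior in the derivation: when $p = 0$ or $q = 0$, the factor $(1-1)^p$ or $(1-1)^q$ equals $1$ rather than $0$, so the two ``trivial'' contributions to $S(p,q)$ no longer drop out. Tracking them correctly is what produces the exceptional formulas for $s(n,0)$ and $s(0,n)$ and, ultimately, the exceptional character of the algebras $\bbO_{n,0}$ and $\bbO_{0,n}$ asserted in Theorem \ref{thmIso}(iv).
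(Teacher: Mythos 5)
Your proposal is correct, and it reaches the lemma by a genuinely different computation than the paper's. Both arguments rest on the same invariant, the statistics $s(p,q)$ together with Lemma~\ref{EtOui!}, but where the paper evaluates $s(p,q)$ separately for each listed signature in each congruence class of $n$ modulo $4$ as explicit sums of binomial coefficients, and then asserts that these values are pairwise distinct and strictly between $s(n,0)$ and $s(0,n)$, you derive a single closed formula via the roots-of-unity filter: writing $2^n-2s(p,q)=\sum_x(-1)^{\alpha_{p,q}(x)}$ and expanding into the four products $(1-\omega)^p(1+\omega)^q$, $\omega^4=1$, gives
\[
s(p,q)=2^{n-1}-2^{n/2-1}\cos\tfrac{\pi(p-q)}{4}\quad(p,q>0),\qquad
s(n,0)=2^{n-2}-2^{n/2-1}\cos\tfrac{n\pi}{4},\qquad
s(0,n)=3\cdot 2^{n-2}-2^{n/2-1}\cos\tfrac{n\pi}{4},
\]
which reproduces the paper's table (e.g.\ $s(12,0)=1056$, $s(8,4)=2080$, $s(0,12)=3104$), so your boundary bookkeeping with $0^0=1$ is right. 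What your route buys: the exclusion of $\bbO_{n,0}$ and $\bbO_{0,n}$ reduces to the transparent strict inequality $2^{n-2}>2^{n/2}$, i.e.\ exactly $n\geq 5$, and the formula makes visible that $s(p,q)$ depends only on $(p-q)\bmod 8$, consistent with Theorem~\ref{thmIso}(i)--(ii); the paper's route stays with elementary counting but leaves a multi-case distinctness check essentially asserted. One point to make explicit in a final write-up: the claim that the cosines are pairwise distinct at the listed signatures genuinely requires the mod-$8$ check, since for two odd values of $p-q$ differing by $2$ the cosines can coincide in general (e.g.\ $\cos\tfrac{7\pi}{4}=\cos\tfrac{\pi}{4}$); for the signatures actually listed ($p-q=n-2,n-4$ when $n\equiv 1$, and $p-q=n-2,n-6$ when $n\equiv 3\pmod 4$) the signs come out opposite, and in the even classes the relevant values are $0,\pm1$, so the argument does go through, but only after this short verification.
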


\begin{proof}
Let $n= 4k$, then 
\begin{align*}
s \left( n-1, 1 \right) & = \sum_{i=0}^{k-1} \binom {n-1} {4i} +2  \binom {n-1} {4i+2} + \binom {n-1} {4i+3}, \\ 
    s \left( n-2, 2 \right) &=   \sum_{i=0}^{k-1} 3 \binom {n-2} {4i} +3   \binom {n-2} {4i+2} +   \sum_{i=0}^{k-2}  2 \binom {n-2} {4i+3},  \\
s \left( n-4, 4 \right) &=   \sum_{i=0}^{k} 14 \binom {n-4} {4i} +
\sum_{i=0}^{k-2}  4   \binom {n-4} {4i+1} +  10   \binom {n-4} {4i+2} +  4 \binom {n-4} {4i+3} .
\end{align*}
Let $n= 4k + 1$, then 
\begin{align*}
 s \left( n-1, 1 \right)& =   \sum_{i=0}^{k-1} \binom {n-1} {4i} +2  \binom {n-1} {4i+2} + \binom {n-1} {4i+3} + \binom {n-1} {n-1},  \\
s \left( n-2, 2 \right) &=   \sum_{i=0}^{k-1} 3 \binom {n-2} {4i} +3   \binom {n-2} {4i+2} +  2 \binom {n-2} {4i+3}  . 
\end{align*}
Let $n= 4k + 2$, then 
\begin{align*}
s \left( n-1, 1 \right) &=   \sum_{i=0}^{k-1} \binom {n-1} {4i} +2  \binom {n-1} {4i+2} + \binom {n-1} {4i+3} + \binom {n-1} {n-2} ,\\
s \left( n-2, 2 \right) &=   \sum_{i=0}^{k} 3 \binom {n-2} {4i} +   \sum_{i=0}^{k-1}   3   \binom {n-2} {4i+2} + 2 \binom {n-2} {4i+3}, \\
s \left( n-3, 3 \right) &=   \sum_{i=0}^{k-1} 7  \binom {n-3} {4i} +  \binom {n-3} {4i+1}  + 5  \binom {n-3} {4i+2} + 3 \binom {n-3} {4i+3}.  
\end{align*}
Let $n= 4k + 3$, then 
\begin{align*}
s \left( n-1, 1 \right)& =   \sum_{i=0}^{k} \binom {n-1} {4i} +2  \binom {n-1} {4i+2} +  \sum_{i=0}^{k-1}  \binom {n-1} {4i+3}  ,\\
s \left( n-3, 3 \right) &=   \sum_{i=0}^{k} 7  \binom {n-3} {4i} + \sum_{i=0}^{k-1}   \binom {n-3} {4i+1}  + 5  \binom {n-3} {4i+2} + 3 \binom {n-3} {4i+3} .
\end{align*}

For $n\geq 5$, all the above the statistics are distinct and are strictly bounded by $s(n,0)$, the minimum, and $s(0,n)$, the maximum.
\end{proof}

Considering the symmetry and the periodicity, the above lemmas imply Theorem \ref{thmIso}, 
Parts $(iii)$ and $(iv)$.

\subsection{Table of $\mathbb{O}_{p,q}$}

The following table allows one to visualize the classes of algebras $\mathbb{O}_{p,q}$ and the  corresponding statistics.

  \begin{landscape}
\begin{center}

\begin{tikzpicture}  [scale=0.9]

\draw (0.5,0.7) node{   \textcolor{Goldenrod}{$\mathbb{O}_{12,0}$}};
\draw (0.5,0.3) node{\tiny{$1056$}};

\draw (2.5,0.7) node{\textcolor{VioletRed}{$\mathbb{O}_{11,1}$}};
\draw (2.5,0.3) node{\tiny{$2048$}};

\draw (4.5,0.7) node{\textcolor{PineGreen}{$\mathbb{O}_{10,2}$}};
\draw (4.5,0.3) node{\tiny{$2016$}};

\draw (6.5,0.7) node{\textcolor{VioletRed}{$\mathbb{O}_{9,3}$}};
\draw (6.5,0.3) node{\tiny{$2048$}};

\draw (8.5,0.7) node{\textcolor{SpringGreen}{$\mathbb{O}_{8,4}$}};
\draw (8.5,0.3) node{\tiny{$2080$}};

\draw (10.5,0.7) node{\textcolor{VioletRed}{$\mathbb{O}_{7,5}$}};
\draw (10.5,0.3) node{\tiny{$2048$}};

\draw (12.5,0.7) node{\textcolor{PineGreen}{$\mathbb{O}_{6,6}$}};
\draw (12.5,0.3) node{\tiny{$2016$}};

\draw (14.5,0.7) node{\textcolor{VioletRed}{$\mathbb{O}_{5,7}$}};
\draw (14.5,0.3) node{\tiny{$2048$}};

\draw (16.5,0.7) node{\textcolor{SpringGreen}{$\mathbb{O}_{4,8}$}};
\draw (16.5,0.3) node{\tiny{$2080$}};

\draw (18.5,0.7) node{\textcolor{VioletRed}{$\mathbb{O}_{3,9}$}};
\draw (18.5,0.3) node{\tiny{$2048$}};

\draw (20.5,0.7) node{\textcolor{PineGreen}{$\mathbb{O}_{2,10}$}};
\draw (20.5,0.3) node{\tiny{$2016$}};

\draw (22.5,0.7) node{\textcolor{VioletRed}{$\mathbb{O}_{1,11}$}};
\draw (22.5,0.3) node{\tiny{$2048$}};

\draw (24.5,0.7) node{  \textcolor{red}{$\mathbb{O}_{0,12}$}};
\draw (24.5,0.3) node{\tiny{$3104$}};

\draw (1.5,1.7) node{   \textcolor{Goldenrod}{$\mathbb{O}_{11,0}$}};
\draw (1.5,1.3) node{\tiny{$528$}};

\draw (3.5,1.7) node{\textcolor{blue}{ $\mathbb{O}_{10,1}$}};
\draw (3.5,1.3) node{\tiny{$1008$}};

\draw (5.5,1.7) node{\textcolor{blue}{ $\mathbb{O}_{9,2}$}};
\draw (5.5,1.3) node{\tiny{$1008$}};

\draw (7.5,1.7) node{ \textcolor{SkyBlue}{$\mathbb{O}_{8,3}$}};
\draw (7.5,1.3) node{\tiny{$1040$}};

\draw (9.5,1.7) node{ \textcolor{SkyBlue}{$\mathbb{O}_{7,4}$}};
\draw (9.5,1.3) node{\tiny{$1040$}};

\draw (11.5,1.7) node{ \textcolor{blue}{$\mathbb{O}_{6,5}$}};
\draw (11.5,1.3) node{\tiny{$1008$}};

\draw (13.5,1.7) node{\textcolor{blue}{ $\mathbb{O}_{5,6}$}};
\draw (13.5,1.3) node{\tiny{$1008$}};

\draw (15.5,1.7) node{\textcolor{SkyBlue}{ $\mathbb{O}_{4,7}$}};
\draw (15.5,1.3) node{\tiny{$1040$}};

\draw (17.5,1.7) node{ \textcolor{SkyBlue}{$\mathbb{O}_{3,8}$}};
\draw (17.5,1.3) node{\tiny{$1040$}};

\draw (19.5,1.7) node{ \textcolor{blue}{$\mathbb{O}_{2,9}$}};
\draw (19.5,1.3) node{\tiny{$1008$}};

\draw (21.5,1.7) node{\textcolor{blue}{ $\mathbb{O}_{1,10}$}};
\draw (21.5,1.3) node{\tiny{$1008$}};

\draw (23.5,1.7) node{   \textcolor{red}{$\mathbb{O}_{0,11}$}};
\draw (23.5,1.3) node{\tiny{$1552$}};

\draw (2.5,2.7) node{  \textcolor{Goldenrod}{$\mathbb{O}_{10,0}$}};
\draw (2.5,2.3) node{\tiny{$256$}};

\draw (4.5,2.7) node{\textcolor{PineGreen}{$\mathbb{O}_{9,1}$}};
\draw (4.5,2.3) node{\tiny{$496$}};

\draw (6.5,2.7) node{\textcolor{VioletRed}{$\mathbb{O}_{8,2}$}};
\draw (6.5,2.3) node{\tiny{$512$}};

\draw (8.5,2.7) node{\textcolor{SpringGreen}{$\mathbb{O}_{7,3}$}};
\draw (8.5,2.3) node{\tiny{$528$}};

\draw (10.5,2.7) node{\textcolor{VioletRed}{$\mathbb{O}_{6,4}$}};
\draw (10.5,2.3) node{\tiny{$512$}};

\draw (12.5,2.7) node{\textcolor{PineGreen}{$\mathbb{O}_{5,5}$}};
\draw (12.5,2.3) node{\tiny{$496$}};

\draw (14.5,2.7) node{\textcolor{VioletRed}{$\mathbb{O}_{4,6}$}};
\draw (14.5,2.3) node{\tiny{$512$}};

\draw (16.5,2.7) node{\textcolor{SpringGreen}{$\mathbb{O}_{3,7}$}};
\draw (16.5,2.3) node{\tiny{$528$}};

\draw (18.5,2.7) node{\textcolor{VioletRed}{$\mathbb{O}_{2,8}$}};
\draw (18.5,2.3) node{\tiny{$512$}};

\draw (20.5,2.7) node{\textcolor{PineGreen}{$\mathbb{O}_{1,9}$}};
\draw (20.5,2.3) node{\tiny{$496$}};

\draw (22.5,2.7) node{  \textcolor{red}{$\mathbb{O}_{0,10}$}};
\draw (22.5,2.3) node{\tiny{$768$}};

\draw (3.5,3.7) node{   \textcolor{Goldenrod}{$\mathbb{O}_{9,0}$}};
\draw (3.5,3.3) node{\tiny{$120$}};

\draw (5.5,3.7) node{\textcolor{blue}{ $\mathbb{O}_{8,1}$}};
\draw (5.5,3.3) node{\tiny{$248$}};

\draw (7.5,3.7) node{ \textcolor{SkyBlue}{$\mathbb{O}_{7,2}$}};
\draw (7.5,3.3) node{\tiny{$264$}};

\draw (9.5,3.7) node{ \textcolor{SkyBlue}{$\mathbb{O}_{6,3}$}};
\draw (9.5,3.3) node{\tiny{$264$}};

\draw (11.5,3.7) node{ \textcolor{blue}{$\mathbb{O}_{5,4}$}};
\draw (11.5,3.3) node{\tiny{$248$}};

\draw (13.5,3.7) node{ \textcolor{blue}{$\mathbb{O}_{4,5}$}};
\draw (13.5,3.3) node{\tiny{$248$}};

\draw (15.5,3.7) node{ \textcolor{SkyBlue}{$\mathbb{O}_{3,6}$}};
\draw (15.5,3.3) node{\tiny{$264$}};

\draw (17.5,3.7) node{ \textcolor{SkyBlue}{$\mathbb{O}_{2,7}$}};
\draw (17.5,3.3) node{\tiny{$264$}};

\draw (19.5,3.7) node{ \textcolor{blue}{$\mathbb{O}_{1,8}$}};
\draw (19.5,3.3) node{\tiny{$248$}};

\draw (21.5,3.7) node{   \textcolor{red}{$\mathbb{O}_{0,9}$}};
\draw (21.5,3.3) node{\tiny{$376$}};

\draw (4.5,4.7) node{  \textcolor{Goldenrod}{$\mathbb{O}_{8,0}$}};
\draw (4.5,4.3) node{\tiny{$56$}};

\draw (6.5,4.7) node{\textcolor{VioletRed}{$\mathbb{O}_{7,1}$}};
\draw (6.5,4.3) node{\tiny{$128$}};

\draw (8.5,4.7) node{\textcolor{SpringGreen}{$\mathbb{O}_{6,2}$}};
\draw (8.5,4.3) node{\tiny{$136$}};

\draw (10.5,4.7) node{\textcolor{VioletRed}{$\mathbb{O}_{5,3}$}};
\draw (10.5,4.3) node{\tiny{$128$}};

\draw (12.5,4.7) node{\textcolor{PineGreen}{$\mathbb{O}_{4,4}$}};
\draw (12.5,4.3) node{\tiny{$120$}};

\draw (14.5,4.7) node{\textcolor{VioletRed}{$\mathbb{O}_{3,5}$}};
\draw (14.5,4.3) node{\tiny{$128$}};

\draw (16.5,4.7) node{\textcolor{SpringGreen}{$\mathbb{O}_{2,6}$}};
\draw (16.5,4.3) node{\tiny{$136$}};

\draw (18.5,4.7) node{\textcolor{VioletRed}{$\mathbb{O}_{1,7}$}};
\draw (18.5,4.3) node{\tiny{$128$}};

\draw (20.5,4.7) node{  \textcolor{red}{$\mathbb{O}_{0,8}$}};
\draw (20.5,4.3) node{\tiny{$184$}};

\draw (5.5,5.7) node{   \textcolor{Goldenrod}{$\mathbb{O}_{7,0}$}};
\draw (5.5,5.3) node{\tiny{$28$}};

\draw (7.5,5.7) node{ \textcolor{SkyBlue}{$\mathbb{O}_{6,1}$}};
\draw (7.5,5.3) node{\tiny{$68$}};

\draw (9.5,5.7) node{\textcolor{SkyBlue}{ $\mathbb{O}_{5,2}$}};
\draw (9.5,5.3) node{\tiny{$68$}};

\draw (11.5,5.7) node{ \textcolor{blue}{$\mathbb{O}_{4,3}$}};
\draw (11.5,5.3) node{\tiny{$60$}};

\draw (13.5,5.7) node{ \textcolor{blue}{$\mathbb{O}_{3,4}$}};
\draw (13.5,5.3) node{\tiny{$60$}};

\draw (15.5,5.7) node{ \textcolor{SkyBlue}{$\mathbb{O}_{2,5}$}};
\draw (15.5,5.3) node{\tiny{$68$}};

\draw (17.5,5.7) node{ \textcolor{SkyBlue}{$\mathbb{O}_{1,6}$}};
\draw (17.5,5.3) node{\tiny{$68$}};

\draw (19.5,5.7) node{   \textcolor{red}{$\mathbb{O}_{0,7}$}};
\draw (19.5,5.3) node{\tiny{$92$}};

\draw (6.5,6.7) node{  \textcolor{Goldenrod}{$\mathbb{O}_{6,0}$}};
\draw (6.5,6.3) node{\tiny{$16$}};

\draw (8.5,6.7) node{\textcolor{SpringGreen}{$\mathbb{O}_{5,1}$}};
\draw (8.5,6.3) node{\tiny{$36$}};

\draw (10.5,6.7) node{\textcolor{VioletRed}{$\mathbb{O}_{4,2}$}};
\draw (10.5,6.3) node{\tiny{$32$}};

\draw (12.5,6.7) node{\textcolor{PineGreen}{$\mathbb{O}_{3,3}$}};
\draw (12.5,6.3) node{\tiny{$28$}};

\draw (14.5,6.7) node{\textcolor{VioletRed}{$\mathbb{O}_{2,4}$}};
\draw (14.5,6.3) node{\tiny{$32$}};

\draw (16.5,6.7) node{\textcolor{SpringGreen}{$\mathbb{O}_{1,5}$}};
\draw (16.5,6.3) node{\tiny{$36$}};

\draw (18.5,6.7) node{  \textcolor{red}{$\mathbb{O}_{0,6}$}};
\draw (18.5,6.3) node{\tiny{$48$}};

\draw (7.5,7.7) node{   \textcolor{Goldenrod}{$\mathbb{O}_{5,0}$}};
\draw (7.5,7.3) node{\tiny{$10$}};

\draw (9.5,7.7) node{ \textcolor{SkyBlue}{$\mathbb{O}_{4,1}$}};
\draw (9.5,7.3) node{\tiny{$18$}};

\draw (11.5,7.7) node{ \textcolor{blue}{$\mathbb{O}_{3,2}$}};
\draw (11.5,7.3) node{\tiny{$14$}};

\draw (13.5,7.7) node{ \textcolor{blue}{$\mathbb{O}_{2,3}$}};
\draw (13.5,7.3) node{\tiny{$14$}};

\draw (15.5,7.7) node{ \textcolor{SkyBlue}{$\mathbb{O}_{1,4}$}};
\draw (15.5,7.3) node{\tiny{$18$}};

\draw (17.5,7.7) node{  \textcolor{red}{ $\mathbb{O}_{0,5}$}};
\draw (17.5,7.3) node{\tiny{$26$}};

\draw (8.5,8.7) node{\textcolor{PineGreen}{$\mathbb{O}_{4,0}$}};
\draw (8.5,8.3) node{\tiny{$6$}};

\draw (10.5,8.7) node{\textcolor{VioletRed}{$\mathbb{O}_{3,1}$}};
\draw (10.5,8.3) node{\tiny{$8$}};

\draw (12.5,8.7) node{\textcolor{PineGreen}{$\mathbb{O}_{2,2}$}};
\draw (12.5,8.3) node{\tiny{$6$}};

\draw (14.5,8.7) node{\textcolor{VioletRed}{$\mathbb{O}_{1,3}$}};
\draw (14.5,8.3) node{\tiny{$8$}};

\draw (16.5,8.7) node{  \textcolor{red}{$\mathbb{O}_{0,4}$}};
\draw (16.5,8.3) node{\tiny{$14$}};

\draw (9.5,9.7) node{ \textcolor{blue}{$\mathbb{O}_{3,0}$}};
\draw (9.5,9.3) node{\tiny{$3$}};

\draw (11.5,9.7) node{ \textcolor{blue}{$\mathbb{O}_{2,1}$}};
\draw (11.5,9.3) node{\tiny{$3$}};

\draw (13.5,9.7) node{ \textcolor{blue}{$\mathbb{O}_{1,2}$}};
\draw (13.5,9.3) node{\tiny{$3$}};

\draw (15.5,9.7) node{   \textcolor{red}{$\mathbb{O}_{0,3}$}};
\draw (15.5,9.3) node{\tiny{$7$}};
\draw (12.5,-2) node{    Figure 2 : Summary table of classification of algebras $\mathbb{O}_{p,q}$  };

\end{tikzpicture}
\end{center}

  \end{landscape}
  
The table is useful to understand the symmetry and periodicity results.  
The first two rows are different than the others as in small dimensions some ``degeneracy'' occurs.
The two main properties $\bbO_{p,q}\simeq \bbO_{q,p}\;$ 
and $\bbO_{p,q+4}\simeq \bbO_{p+4,q}\;$ are 
symmetry with respect to the vertical middle axis and a periodicity on each row of the table,
respectively. The algebras, $\bbO_{n,0}$ and $\bbO_{0,n}$, are exceptional. 
On each row, besides the two exceptional algebras, there are exactly two other non-isomorphic algebras in the case $n$ is odd, and exactly three in the case $n$ is even ($n\geq 5$).

\medskip

\noindent \textbf{Acknowledgments}.
The authors are grateful to Valentin Ovsienko for enlightenning discussions.   
M.K. was partially supported by the 
Interuniversity Attraction Poles Programme initiated by the Belgian Science Policy Office. 
S.M-G. was partially supported by the PICS05974 ``PENTAFRIZ'' of CNRS.
M.K., resp. S.M-G., would like to thank the Institute Camille Jordan, CNRS, Universit\'e Lyon 1,
resp. l'Universit\'e de Li\`ege, for hospitality.
\vskip 1cm












\end{document}